\crefname{algo}{Algorithm}{Algorithms}
\crefname{assumption}{Assumption}{Assumptions}
\Crefname{assumption}{Assumption}{Assumptions}
\crefname{example}{Example}{Examples}
\Crefname{example}{Example}{Examples}
\theoremstyle{plain}
\newtheorem{lemma}{Lemma}[section]
\newcounter{exampleSection}[section]
\newcommand{\proj}{\operatorname{P}}
\newcommand{\R}{\mathbb{R}}
\newcommand{\BNB}{Banach-Ne{\v c}as-Babu{\v s}ka}
\newcommand{\utilde}{\tilde{u}}
\newcommand{\Gin}{{\Gamma_{\mathrm{in}}}}
\newcommand{\GinMu}{{\Gamma_{\mathrm{in}}^\mu}}
\newcommand{\norm}[1]{\|#1\|}
\newcommand{\tripleNorm}[1]{{\left\vert\kern-0.25ex\left\vert\kern-0.25ex\left\vert #1
      \right\vert\kern-0.25ex\right\vert\kern-0.25ex\right\vert}}
\newcommand{\opNorm}[1]{\norm{#1}_{\mathrm{op}}}
\newcommand{\Cinf}[1][\Omega]{C^\infty(#1)}
\newcommand{\CinfM}[1][\Omega]{\Cinf[#1]^m}
\newcommand{\Ccinf}[1][\Omega]{C_c^\infty(#1)}
\newcommand{\CcinfM}[1][\Omega]{\Ccinf[#1]^m}
\newcommand{\Ltwo}[1][\Omega]{L^2(#1)}
\newcommand{\LtwoM}[1][\Omega]{\Ltwo[#1]^m}
\newcommand{\Linfty}[1][\Omega]{L^\infty(#1)}
\newcommand{\FSop}{A_\mu}
\newcommand{\FScoeff}[1][i]{A_\mu^{#1}}
\newcommand{\FSadjOp}{\FSop^*}
\newcommand{\graphSpace}[1][\Omega]{H(\FSop;#1)}
\newcommand{\graphSubspace}[1][\Omega]{H_0(\FSop;#1)}
\newcommand{\graphSpaceAdj}[1][\Omega]{H(\FSadjOp;#1)}
\newcommand{\graphSubspaceAdj}[1][\Omega]{H_0(\FSadjOp;#1)}
\newcommand{\FSboundaryD}{D_\mu}
\newcommand{\FSboundaryM}{M_\mu}
\newcommand{\paramGraphSpace}[1][\Omega]{H(A_\mu)}
\newcommand{\paramGraphSubspace}[1][\Omega]{H_0(A_\mu)}
\newcommand{\paramGraphSpaceAdj}[1][\Omega]{H(A^*_\mu)}
\newcommand{\paramGraphSubspaceAdj}[1][\Omega]{H_0(A^*_\mu)}
\newcommand{\xcalBarZ}{\overline{\Xinter}}
\newcommand{\ycalBarZ}{\overline{\Yinter}}
\newcommand{\kolmogorov}[2][N]{\textnormal{d}_{#1}(#2)}
\newcommand{\kolmogorovSec}[2][\Gamma]{\textnormal{d}_{N}(#2; #1)}
\newcommand{\tr}{\operatorname{tr}}
\newcommand{\linSpan}{\operatorname{span}}
\newcommand{\Inf}{\operatornamewithlimits{\vphantom{p}inf}} 
\newcommand{\clos}{\operatorname{clos}}
\newcommand{\thalf}{\tfrac{1}{2}}
\newcommand{\FindT}{\textnormal{Find}\;}
\newcommand{\forallT}{\textnormal{for all}\;}
\newcommand{\andT}{\;\textnormal{and}\;}
\newcommand{\onT}{\;\textnormal{on}\;}
\newcommand{\diff}{\mathop{}\!\mathrm{d}}
\newcommand{\Mcal}{\mathcal{M}}
\newcommand{\Pcal}{\mathcal{P}}
\newcommand{\Xhat}{\hat{X}}
\newcommand{\neighbourhood}{U(\mu_0)}
\newcommand{\projOne}{\operatorname{proj}_{1}}
\newcommand\restr[1]{\raisebox{-.5ex}{$|$}_{#1}}
\newcommand{\eq}{\;=\;}
\newcommand{\totalSpace}{\Xhat}
\newcommand{\trialSpace}{X_\mu}
\newcommand{\testSpace}{Y_\mu}
\newcommand{\solSection}{\sigma_{\textnormal{sol}}}
\newcommand{\Gconst}{\Gamma_X}
\newcommand{\Xinter}{X_0}
\newcommand{\Yinter}{Y_0}
\newcommand{\ahat}{\hat{a}_{\mu}} 
\newtheorem{theorem}{Theorem}
\newtheorem{proposition}[theorem]{Proposition}%
\newtheorem{example}{Example}%
\newtheorem{remark}{Remark}%
\newtheorem{definition}{Definition}%
\title{Sectional Kolmogorov $N$-widths for parameter-dependent function spaces: A general framework with application to parametrized Friedrichs' systems}
\author[1]{Christian Engwer\,\orcidlink{0000-0002-6041-8228} }
\author[1]{Mario Ohlberger\,\orcidlink{0000-0002-6260-3574}}
\author[1,2,3]{\underline{Lukas Renelt}\,\orcidlink{0009-0003-3161-5219}
\footnote{Corresponding author: lukas.renelt@inria.fr}}
\affil[1]{%
  University of M\"unster,
  Institute for Analysis and Numerics,
	Einsteinstr.\ 62,
	M\"unster,
	48149,
	Germany}
\affil[2]{%
	INRIA,
	48 Rue Barrault,
	75647 Paris,
	France}
\affil[3]{%
	CERMICS,
	Ecole nationale des ponts et chauss\'ees, IP Paris,
	77455 Marne la Vall\'ee,
	France}
\begin{document}
	
\maketitle	
	
\begin{abstract}
  We investigate parametrized variational problems
  where for each parameter the solution may originate from a different parameter-dependent function space.
  Our main motivation is the theory of Friedrichs' systems, a large abstract class of
  linear PDE-problems whose solutions are sought in
  operator- (and thus parameter-)dependent graph spaces.
  Other applications include function spaces on parametrized domains or discretizations involving data-dependent stabilizers.
  Concerning the set of all parameter-dependent solutions, we argue that in these cases the interpretation as a ``solution manifold'' widely adopted in the model order reduction community is no longer applicable.
  Instead, we propose a novel framework based on the theory of fiber bundles and explain how established concepts such as approximability generalize by introducing a Sectional Kolmogorov $N$-width.
  Further, we prove exponential approximation rates of this $N$-width if a norm equivalence criterion is fulfilled.
  Applying this result to problems with Friedrichs' structure then gives a sufficient criterion that can be easily verified.
\end{abstract}

\section{Introduction}\label{sec:introduction}
In the following we are interested in general variational problems of the form
\begin{equation}\label{eq:generalVariationalProblem}
  \FindT u_\mu\in\trialSpace\colon\quad
  b_\mu(u_\mu,v) = f_\mu(v) \qquad\forallT v\in\testSpace
\end{equation}
where a parameter $\mu\in\Pcal\subset\R^p$ may influence both the (bi-)linear forms $b_\mu(\cdot,\cdot)$ and $f_\mu(\cdot)$, e.g.\ in the form of physical parameters, as well as the trial and test spaces $\trialSpace$ and $\testSpace$. For the $\mu$-dependency in the function spaces, one should think either of a parametrization of the functions regularity, a dependency in the norms or a geometrical parametrization of the underlying domain and/or its boundary.
\par
In applications where parametrized problems such as~\cref{eq:generalVariationalProblem} need to be solved for many different parameters, using high-dimensional discretizations such as the finite element method (FEM) for each individual parameter ceases to be viable. In the case of parameter-independent function spaces, model order reduction (MOR) has proven itself to efficiently address this challenge by first determining a \emph{reduced approximation space} from high-dimensional evaluations which subsequently allows for a fast computation of reduced solutions for arbitrary parameters, we refer to~\cite{
	quarteroni2015reducedBasis,hesthaven2016reducedBasis,benner2017modelReduction} for introductory works.
\par
Considering parametrized function spaces as in~\cref{eq:generalVariationalProblem}, mainly geometric parametrizations $\trialSpace=X(\Omega_\mu)$ have been investigated, see for example~\cite{thiyagarajan2005shapeOptimizationRB,lassila2010parametric,manzoni2012shape} for contributions in the context of shape optimization.
Parametrizations of the regularity, let alone the general setting~\cref{eq:generalVariationalProblem}, have (at least to our knowledge) not been analyzed thus far, potentially due to a perceived lack of applications requiring such a general approach.
The necessity of a generalized theory becomes particularly apparent if one considers so called \emph{Friedrichs' systems}~\cite{
	friedrichs1958,ernGuermond2007,ernGuermond2006friedrichs1,ernGuermond2006friedrichs2,ernGuermond2008friedrichs3,antonic2009graphSpaces,antonic2010intrinsic,antonic2011boundaryOps,burazin2016nonStationaryFS,antonic2017complexFS}
whose solutions originate from function spaces with the graph-structure
\begin{equation}
	\trialSpace \;=\; \left\{u\in\LtwoM \;:\; A_\mu u \in\LtwoM\right\}
\end{equation}
where $A_\mu$ denotes a pametrized first order differential operator with certain additional properties (a rigorous definition will be given in~\cref{sec:friedrichs}).
\par
A second application are discretizations of problems with hyperbolic character. Focusing on the linear problem~\cref{eq:generalVariationalProblem}, this amounts to problems including a dominating advection term that needs to be properly handled. Discretizations such as the Streamline Upwind Petrov-Galerkin (SUPG) method, ensure discrete stability by including additional stabilizing components into the function spaces. In the case of SUPG one for example pairs a given trial space $X$ with the test space
\begin{equation}
	\testSpace \coloneqq \left\{v + \tau_\mu \vec{b}_\mu\nabla v \;:\; v\in X\right\}
\end{equation}
where $\vec{b}_\mu$ is the advection field and $\tau_\mu$ denotes a data- (and thus parameter-) dependent stabilization parameter. Stabilization by the computation of supremizers falls into a similar category, considering for example the Discontinuous Petrov-Galerkin (DPG) method with optimal test functions~\cite{demkowiczDPG1} or the approach presented in~\cite{DahmenHuangSchwab}, both resulting in a parameter-dependent test space $\testSpace$ if the underlying PDE is parametrized.
In \cite{romor2025}, model order reduction for Fridrichs' systems has been discussed with localization based on the Discontinuous Galerkin method.
\par
A third research field where the full order model involves parameter-dependent spaces, are spectral multiscale methods. Here, the data functions involve fluctuations on a micro scale $h$ which is too small to be resolved by conventional finite element discretizations. However, the micro scale variations still influence the global solution behavior significantly and thus can also not be omitted. One approach of dealing with such problems is to solve the problem on a tractable macro scale $H\gg h$ and use problem-adapted basis functions which are usually obtained by solving local problems, see e.g.\ \cite{babuska1994GFEM}. Such a generalized finite element space on a coarse mesh $\Omega_H$ can then be of the form
\begin{equation}
	\trialSpace \coloneqq \linSpan\left\{\varphi_T\cdot\left(u_{\mu}^T\right)\restr{T} \;:\; T\in \Omega_H\right\}
\end{equation}
where $\varphi_T$ is a macro-scale partition-of-unity and each $u_{\mu}^T$ solves a local problem on a fine-scale neighborhood $\mathcal{N}_h(T)$ of the macro element $T$.

\par
These three examples should already make it clear that parametrized function spaces are not just a mere academic curiosity. While the case of solely parameter-dependent test spaces $\testSpace$ has already been discussed in~\cite{renelt2023model}, the case of parametrized trial spaces is much more interesting from a theoretical point of view. One of the main contributions of this paper is a new way of interpreting solutions from different function spaces in a differential geometric way - similar to the way that model order reduction is commonly interpreted as the approximation of the \emph{solution manifold} consisting of all parameter-dependent solutions. Before continuing this thought in more detail, let us briefly give an outline of the paper:
\par
In~\cref{sec:diffGeoFramework} we introduce our abstract setting and discuss the relation to the concepts currently prevalent in the MOR community.
The section-based perspective is detailed in~\cref{sec:fibers,sec:sectionalApprox} and followed by a discussion of the various ways a parameter may influence the trial space. In~\cref{sec:parameterIndependent} we first show that the parameter-independent case is consistent with the manifold-perspective.
Subsequently, we formalize the further distinction into parametrizations varying the regularity~(\cref{sec:regularityParam}) and geometrical parametrizations~(\cref{sec:geometricalParametrization}) and show an approximability result for the former.
\Cref{sec:friedrichs} then focuses on Friedrichs' systems, first introducing the general concept and subsequently applying the results from~\cref{sec:diffGeoFramework}. This then allows for a classification of various concrete parametrized PDEs permitting a reformulation as a Friedrichs' system.

\section{A differential geometric framework for parametrized solution spaces}\label{sec:diffGeoFramework}
Let us begin by formally defining the abstract problem~\cref{eq:generalVariationalProblem} we will serve as the base for any further considerations. To be precise, we are interested in solutions to the problem
\begin{equation}\tag{\ref{eq:generalVariationalProblem}}
	\FindT u_\mu\in\trialSpace\colon\quad
	b_\mu(u_\mu,v) = f_\mu(v) \qquad\forallT v\in\testSpace
\end{equation}
where $\trialSpace$ and $\testSpace$ are Banach-spaces of $\R^m$-valued functions on a bounded domain $\Omega\subseteq\R^d$ and equipped with potentially parameter-dependent norms $\norm{\cdot}_{\trialSpace}$ and $\norm{\cdot}_{\testSpace}$, respectively. As per usual, the set of all parameters $\Pcal\subseteq\R^p$ is assumed to be compact. Let us briefly mention that all of our considerations will also be valid for $\Pcal$ being a $p$-dimensional compact Banach-manifold. We further assume that for any fixed parameter $\mu$ the problem is well-posed. We want to emphasize that~\cref{eq:generalVariationalProblem} also allows for finite-dimensional problems e.g.\ obtained through discretization of an underlying infinite-dimensional problem.
Under these prerequisites, we are interested in approximating the set of all possible solutions which we denote by
\begin{equation}\label{def:solutionSet}
  \Mcal \;\coloneqq\; \{u_\mu \;:\quad \mu\in\Pcal,\;
  u_\mu\in\trialSpace \;\textnormal{solves}\; \cref{eq:generalVariationalProblem} \}.
\end{equation}
In classic model order reduction theory, all solutions are assumed to originate from the same function space $X$ and one can under minor assumptions classify the solution set $\Mcal$ as a compact sub-manifold of $X$. The core idea of model order reduction is then to approximate $\Mcal$ as a whole by a simpler structure which can be evaluated quickly. A ``linear approximation'' subsequently amounts to finding a linear subspace $X_N\subset X$ of dimension $N$ and a reduced solution is given by projecting onto this subspace. In order to measure the approximation quality of a given $N$-dimensional subspace, one usually compares to the Kolmogorov $N$-width
\begin{equation}\label{eq:kolmogorovNWidth}
  \kolmogorov{\Mcal}
  \coloneqq
  \Inf_{\substack{X_N \subset X \;\textnormal{linear} \\ \dim(X_N)\,\leq\, N}}\;
  \sup_{u_\mu\in\Mcal}\; \Inf_{u_N\in X_N} \norm{u_\mu-u_N}_{X}
\end{equation}
which amounts to the lowest projection error (always considering the worst approximated element) one can reach with an $N$-dimensional subspace. While the optimal space is usually unobtainable, there exist methods that construct quasi-optimal spaces, i.e.\ spaces whose projection error decays with the same rate as the Kolmogorov $N$-width. On the other hand, a slowly decreasing $N$-width means that \emph{any} linear method will not perform well.
We refer to the review \cite{devore2017foundation} for a further discussion of the Kolmogorov $N$-width in the context of model order reduction and generalizations to nonlinear $N$-widths as well as to the alternative concept of entropy numbers for compact sets. 
\par
These considerations demonstrate how central the perspective of approximating a manifold by linear subspaces is to the development of model order reduction methods. However, one notes that these concepts no longer apply to the set~\cref{def:solutionSet} where each element originates from a different function space $\trialSpace$. Similarly, it is not clear how~\cref{eq:kolmogorovNWidth} translates and what ``linear approximation'' actually means. In the following, we propose an alternative viewpoint and show the connections to the aforementioned perspective.

\subsection{Fiber bundles}\label{sec:fibers}
To that end, let us further analyze the structure of the set $\totalSpace\coloneqq \bigcup_{\mu\in\Pcal}X_\mu$ which contains the solution set $\Mcal$. We argue that the structure of $\totalSpace$ resembles that of a \emph{fiber bundle} which is a generalization of the concept of \emph{vector bundles}. Intuitively one may picture these as a collection of vector spaces (called \emph{fibers}) ``glued together'' at the respective zero elements (which are by definition contained in every  vector space), see~\cref{fig:generalBundle}¸ for a visualization.

\begin{figure}
	\centering
	\includegraphics[width=0.5\linewidth]{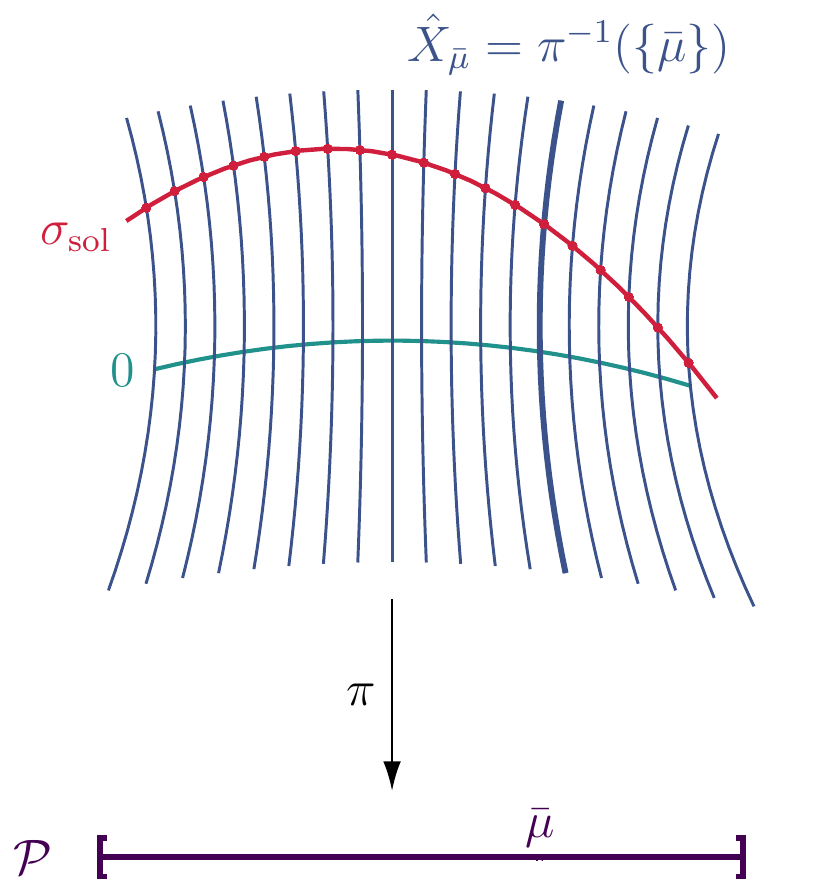}
	\caption{\label{fig:generalBundle}
		Visualization of a fiber bundle $(\totalSpace,\pi)$ over the base space $\Pcal$. Additionally, the \emph{solution section} $\solSection$ is visualized which takes one value (the solution of~\cref{eq:generalVariationalProblem}) in each fiber.}
\end{figure}

 We speak of a fiber/vector bundle if this collection allows for a smooth structure, i.e.\ if one can locally smoothly transition between the different fibers. While the notion of a vector bundle assumes finite-dimensional fibers, generalizations also allow for infinite-dimensional ones.
More precisely, we are interested in the case where each fiber $\trialSpace$ possesses a Banach or Hilbert-space structure, resulting in the definition of a  \emph{Banach-bundle}:
\newcommand{\refFiber}{X_{\mu_0}^{\textnormal{ref}}}
\begin{definition}[Banach bundle]\label{def:banachBundle}
  Let $\totalSpace$ be a topological space (called \emph{total space})and $\Pcal$ a Banach-manifold. Further, let
  $\pi\colon \totalSpace\to\Pcal$ be a surjective and continuous map. Denote by $\totalSpace_\mu\coloneqq\pi^{-1}(\{\mu\})$
  the fibers in $\totalSpace$ and assume that each fiber possesses the structure of a Banach space.
  $(\totalSpace,\pi)$ is called a \emph{Banach bundle} if it is \emph{locally trivial},
  i.e.\ for every base point $\mu_0\in\Pcal$ there
  exists an open neighborhood $\neighbourhood\subset\Pcal$, a Banach space $\refFiber$ and a homeomorphism
  $\tau\colon \pi^{-1}(\neighbourhood)\to \neighbourhood\times\refFiber$ such that:
  \begin{itemize}
  \item There holds $\projOne\circ\;\tau = \pi$ (with $\projOne(u,x)\coloneqq u$).
  \item For all $q\in \neighbourhood$ the Banach spaces $\totalSpace_q$ and $\refFiber$ are isomorphic via the
    restriction of $\tau$ to $\totalSpace_q$.
  \end{itemize}
\end{definition}

This definition formalizes the idea that a Banach bundle should locally ``look like'' the product $\neighbourhood\times\refFiber$ -- similar to manifolds locally ``looking like'' $\R^n$, see also~\cref{fig:rectification} for a visualization.
\begin{figure}
	\centering
	\includegraphics[width=0.5\linewidth]{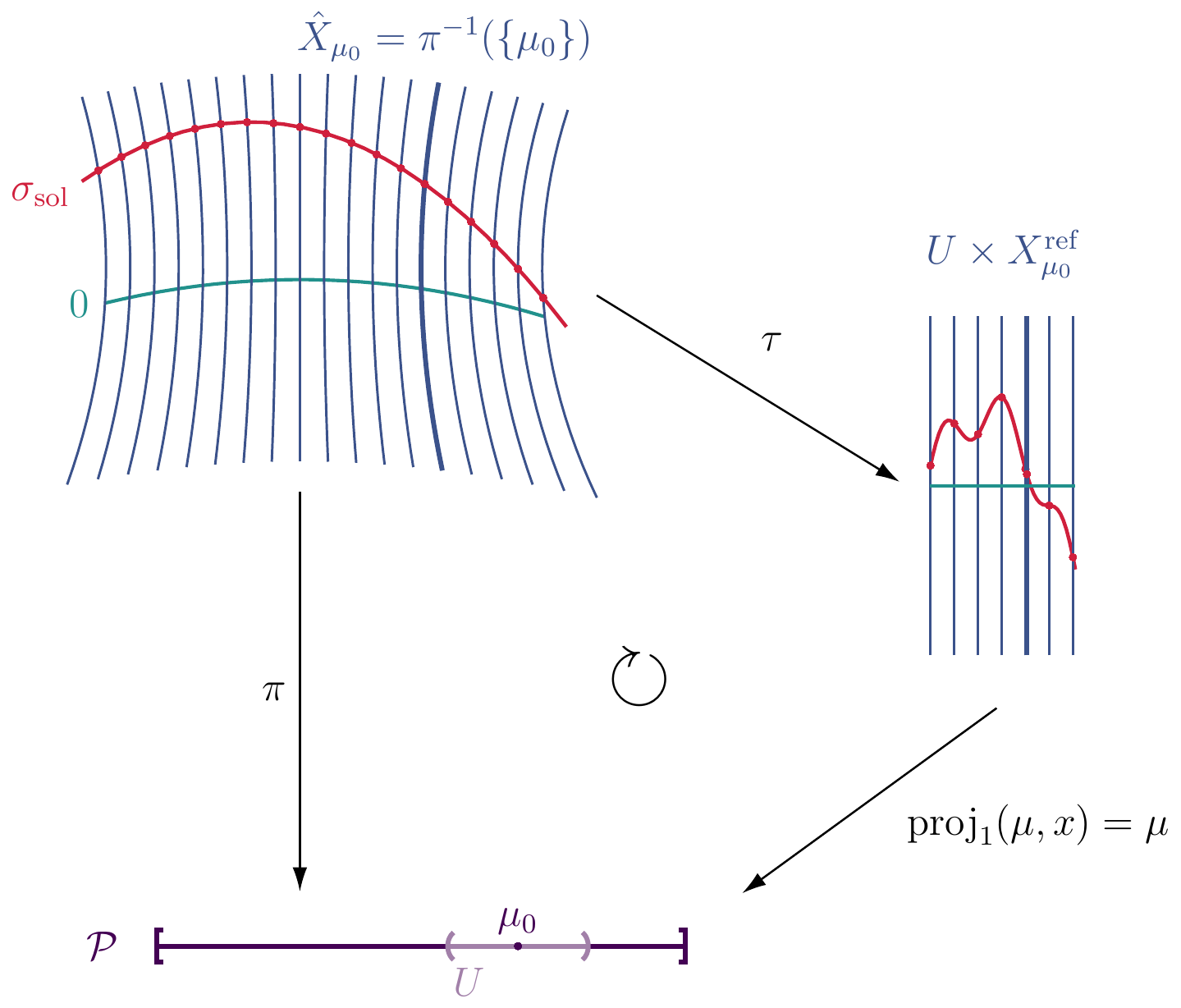}
	\caption{\label{fig:rectification}
		Visualization of the local trivialization $\tau$ in a neighborhood around the fiber $\totalSpace_{\mu_0}$. The diagram commutes.}
\end{figure}
\par
A second important concept which will be required are smooth maps into $\totalSpace$ taking exactly one value in each fiber, known as \emph{sections} of $\totalSpace$:
\begin{definition}[Cross section]\label{def:section}
	Let $\totalSpace$ be a fiber bundle over the base space $\Pcal$ with projection $\pi\colon\totalSpace\to\Pcal$. A \emph{(cross) section} of $\totalSpace$ is a continuous map
	\begin{equation*}
		\sigma\colon \Pcal\to\totalSpace, \quad \pi(\sigma(\mu)) = \mu.
	\end{equation*}
	In other words, for any $\mu\in\Pcal$ the function value $\sigma(\mu)$ is an element of the fiber $\totalSpace_\mu$.
\end{definition}
Note that~\cref{def:banachBundle} already starts with a topological space $\totalSpace$ and subsequently identifies the fibers $\totalSpace_\mu$ as the pre-image of the projection map $\pi\colon\totalSpace\to\Pcal$, i.e.\ as all the elements being mapped to the same base point $\mu\in\Pcal$. It is not obvious that our collection $\Xhat$ allows for a suitable topology such that the criteria in~\cref{def:banachBundle} hold.
Further, it is of interest whether such a topology would be unique or if multiple bundle structures could be defined.


The more general question of defining bundle structures based on a collection of fibers
has been investigated in the differential geometry community. We
present an idea going back to a theorem by J.Fell~\cite{fell1969extension}.
\begin{theorem}[Existence of a unique topology]\label{thm:bundles:fell}
  Let $\totalSpace$ be a set and $\pi\colon \totalSpace\to\Pcal$ a surjection such that each fiber
  $\totalSpace_\mu = \pi^{-1}(\{\mu\})$ is a Banach space. Let $\Gamma$ be a vector space of functions
  $\sigma\colon\Pcal\to\totalSpace$, $\sigma(\mu)\in\totalSpace_\mu$. Assume that
  \begin{enumerate}[label=(\roman*)]
  \item For every $\sigma_0\in\Gamma$, the map $\mu\mapsto \norm{\sigma_0(\mu)}_{\totalSpace_\mu}$ is continuous.
  \item For every $\mu_0\in\Pcal$, the set $\{\sigma(\mu_0) \;|\; \sigma\in\Gamma\}$ is dense in $\totalSpace_{\mu_0}$.
  \end{enumerate}
  Then $\totalSpace$ admits a unique topology such that $(\totalSpace,\pi)$ is a Banach bundle and 
  the (continuous) sections of $\totalSpace$ are precisely the elements in $\Gamma$.
\end{theorem}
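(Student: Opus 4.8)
The plan is to construct the topology on $\totalSpace$ explicitly from the prescribed section space $\Gamma$, to verify the Banach-bundle axioms against this concrete description, and finally to deduce uniqueness by showing that every admissible topology is forced to coincide with the one built. For $\sigma\in\Gamma$, an open set $W\subseteq\Pcal$ and $\varepsilon>0$ I would introduce the \emph{tube}
\begin{equation*}
  T(\sigma,W,\varepsilon)\;\coloneqq\;\bigl\{\eta\in\totalSpace \;:\; \pi(\eta)\in W,\ \norm{\eta-\sigma(\pi(\eta))}_{\totalSpace_{\pi(\eta)}}<\varepsilon\bigr\},
\end{equation*}
which is meaningful because $\eta$ and $\sigma(\pi(\eta))$ lie in the same Banach space, and take the topology of $\totalSpace$ to be the one generated by all tubes together with all preimages $\pi^{-1}(W)$. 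The first thing to check is that these sets form a basis: given $\eta\in T(\sigma_1,W_1,\varepsilon_1)\cap T(\sigma_2,W_2,\varepsilon_2)$, one picks by assumption~(ii) a section $\sigma\in\Gamma$ with $\norm{\eta-\sigma(\pi(\eta))}_{\totalSpace_{\pi(\eta)}}$ as small as needed and then, using that $\mu\mapsto\norm{(\sigma-\sigma_i)(\mu)}_{\totalSpace_\mu}$ is continuous by~(i) --- here it is essential that $\sigma-\sigma_i\in\Gamma$, i.e.\ that $\Gamma$ is a \emph{vector} space --- finds a smaller tube about $\eta$ contained in the intersection.

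Next I would verify the bundle axioms, testing everything on the tube basis. The map $\pi$ is continuous (the $\pi^{-1}(W)$ are open by fiat) and open, since $\pi(T(\sigma,W,\varepsilon))=W$ because $\sigma(\mu)$ itself lies in the tube for every $\mu\in W$. Fibrewise addition $\totalSpace\times_\Pcal\totalSpace\to\totalSpace$ and scalar multiplication are continuous because $\Gamma$ is stable under them and the radii add up under the triangle inequality: if $\eta_1$ lies in a tube about $\sigma_1$ of radius $\varepsilon_1$ and $\eta_2$ in one about $\sigma_2$ of radius $\varepsilon_2$, then $\eta_1+\eta_2$ lies in the tube about $\sigma_1+\sigma_2$ of radius $\varepsilon_1+\varepsilon_2$. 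The norm $\eta\mapsto\norm{\eta}_{\totalSpace_{\pi(\eta)}}$ is continuous: choosing by~(ii) some $\sigma\in\Gamma$ with $\sigma(\mu_0)$ close to a given $\eta_0\in\totalSpace_{\mu_0}$, the reverse triangle inequality $\bigl|\norm{\eta}_{\totalSpace_{\pi(\eta)}}-\norm{\sigma(\pi(\eta))}_{\totalSpace_{\pi(\eta)}}\bigr|\le\norm{\eta-\sigma(\pi(\eta))}_{\totalSpace_{\pi(\eta)}}$ is small on a tube about $\eta_0$, while the subtracted term depends continuously on $\pi(\eta)$ by~(i). Finally every $\sigma_0\in\Gamma$ is continuous as a section, since $\sigma_0^{-1}(T(\sigma,W,\varepsilon))=\{\mu\in W:\norm{(\sigma_0-\sigma)(\mu)}_{\totalSpace_\mu}<\varepsilon\}$ is open by~(i). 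To get that the continuous sections are \emph{precisely} $\Gamma$ rather than a possibly larger module, one needs $\Gamma$ to be a $C(\Pcal)$-module closed under locally uniform limits; as in Fell's original treatment this is either assumed of $\Gamma$ or arranged by passing to that closure, which does not change the topology.

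For the local triviality required by \cref{def:banachBundle}, fix $\mu_0$ and put $\refFiber\coloneqq\totalSpace_{\mu_0}$. Using density~(ii) one extracts from $\Gamma$ sections whose values at $\mu_0$ form a (Schauder) basis of $\refFiber$; the joint continuity in base point and coefficients established above shows that the resulting coordinate map stays a fibrewise isomorphism with uniformly equivalent norms on a sufficiently small neighbourhood $\neighbourhood$, which furnishes the homeomorphism $\tau\colon\pi^{-1}(\neighbourhood)\to\neighbourhood\times\refFiber$ intertwining the projections. Uniqueness is then immediate: if $\mathcal{T}'$ is another topology making $(\totalSpace,\pi)$ a Banach bundle with continuous sections exactly $\Gamma$, then every tube is $\mathcal{T}'$-open by continuity of $\sigma$ and of the fibre norm, so $\mathcal{T}'$ refines the tube topology, and conversely a $\mathcal{T}'$-local trivialization together with density~(ii) writes each $\mathcal{T}'$-open set as a union of tubes, so the two topologies agree.

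I expect the real obstacle to be this last step in infinite dimensions: a general Banach fibre need not admit a Schauder basis and a priori the isomorphism type of the fibres could vary along $\Pcal$, so in full generality one must either impose extra hypotheses, restrict to Hilbert fibres (where Kuiper's theorem even makes the bundle trivial over paracompact $\Pcal$), or be content with the weaker notion of a Banach bundle carrying only a continuous norm --- which is exactly what the construction and axiom checks above produce, and what Fell's theorem delivers in its classical form. The topology construction and the axiom verifications of the first two paragraphs are routine bookkeeping once the tube sets are written down; the genuine content lies in reconciling that output with the strong local triviality built into \cref{def:banachBundle}.
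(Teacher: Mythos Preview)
Your tube construction is exactly the paper's approach: it defines the same sets (written $B_\varepsilon(\sigma,U)$ there), declares them to generate the topology, and defers uniqueness to Fell's original article without further argument. You go considerably further than the paper by actually checking the basis property, continuity of $\pi$, of the fibrewise operations, of the norm, and of the sections in $\Gamma$; none of this appears in the paper's sketch.

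More importantly, you correctly isolate a subtlety the paper passes over in silence: Fell's classical theorem produces a \emph{continuous field of Banach spaces} (continuous norm, enough continuous sections), not a locally trivial bundle in the sense of \cref{def:banachBundle}. Your observation that local triviality may genuinely fail in infinite dimensions --- fibres need not admit Schauder bases and need not even be mutually isomorphic --- is accurate, and your proposed remedies (restrict to Hilbert fibres, or read the conclusion in the weaker Fell sense) are exactly the right ones. The paper tacitly adopts the latter reading without saying so; your caveat is sharper than what the paper states.
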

\begin{remark}
	In the following we will call any set of functions $\Gamma$ fulfilling the assumptions in~\cref{thm:bundles:fell} an \emph{admissible set of sections}¸. Technically, these functions are not actual sections in the sense of~\cref{def:section} as there is no notion of continuity yet. However, as the unique topology from~\cref{thm:bundles:fell} automatically renders all of the elements in $\Gamma$ continuous, we are already referring to them as sections if it is clear that the topology induced by them is used.
\end{remark}

\begin{proof}
  The proof is simple and constructive. If the assumptions hold true, a topology on $\totalSpace$
  is generated by the collection
  \begin{equation*}
    \left\{ B_{\varepsilon}(\sigma,U) \;:\;
    \sigma\in\Gamma,\; U\subset\Pcal \;\textnormal{open}\right\},
  \end{equation*}
  where the $\varepsilon$-neighborhoods $B_{\varepsilon}(\sigma,U)$ are defined as
  \begin{equation*}
    B_{\varepsilon}(\sigma,U)
    \coloneqq \left\{\varphi\in\totalSpace \;:\; 
    \pi(\varphi)\in U,\;
    \norm{\varphi - \sigma(\pi(\varphi))}_{\totalSpace_{\pi(\varphi)}} < \varepsilon\right\}.
  \end{equation*}
  For the proof of uniqueness we refer to~\cite[Proposition~1.6]{fell1969extension}.
\end{proof}

Let us stress that the crucial ingredient for the preceding theorem is the existence of a number of sections
$\Gamma$ which then generate the topology on $\totalSpace$. While the theorem guarantees that for admissible sections $\Gamma$ there is exactly one corresponding topology/bundle structure, the choice of these generating sections is only restricted by the given criteria with possibly multiple feasible candidates.

\subsection{Linear approximation of the solution section}\label{sec:sectionalApprox}
We will now derive a suitable notion of approximability in the given setting. To that end, note that the set of solutions $\Mcal$ has itself the structure of a section. More precisely, we can view $\Mcal$ as the image of the \emph{solution section} 
\begin{equation}
	\solSection\colon\Pcal\to\Xhat, \qquad \mu\mapsto u_\mu
\end{equation}
assigning every $\mu\in\Pcal$ the corresponding solution $u_\mu$ to~\cref{eq:generalVariationalProblem}. Note again, that $\solSection$ is not necessarily smooth.
\par
Summarizing, we have seen that both the general differential geometric setting (i.e.\ the topology of $\totalSpace$), as well as the object we are actually interested in (the solution set $\Mcal$) are defined in terms of sections of $\totalSpace$ - it is thus only natural to introduce a corresponding notion of approximability. Based on~\cref{eq:kolmogorovNWidth}, one can define a \emph{Sectional Kolmogorov $N$-width} as follows:
\begin{definition}[Sectional Kolmogorov $N$-width]
	For a given possibly infinite-dimensional vector space of sections $\Gamma$ we define the Sectional Kolmogorov $N$-width of a given section $\sigma\colon\Pcal\to\totalSpace$ as 
	\begin{equation}\label{eq:sectionalNWidth}
		\kolmogorovSec{\sigma}
		\coloneqq
		\Inf_{\substack{\Gamma_N \subset \Gamma \;\textnormal{linear} \\ \dim(\Gamma_N)\,\leq\, N}}\;
		\sup_{\mu\in\Pcal}\; \Inf_{\sigma_N\in \Gamma_N} \norm{\sigma(\mu)-\sigma_N(\mu)}_{\totalSpace_\mu}.
	\end{equation}
\end{definition}
Note that this definition shifts the perspective from approximating the \emph{image} of the solution map by linear spaces to approximating the solution map itself by a linear combination of maps from a given set of available functions. 

\begin{remark}
	Some observations on the Sectional Kolmogorov $N$-width which directly follow from the definition:
	\begin{itemize}
		\item The Sectional Kolmogorov $N$-width is non-increasing in $N$.
		\item The Sectional Kolmogorov $N$-width is non-increasing in the set of sections with regard to vector subspace inclusion, i.e.\
		\begin{equation}
			\Gamma_1 \subseteq \Gamma_2
			\;\implies\;
			\kolmogorovSec[\Gamma_1]{\sigma} \geq \kolmogorovSec[\Gamma_2]{\sigma}.
		\end{equation}
		\item Let $\sigma$ be a (not necessarily continuous) sectional function and $\mu\mapsto\norm{\sigma(\mu)}_{X_\mu}$ be continuous. Then, the Sectional Kolmogorov $N$-width of $\sigma$ is finite as one has
		\begin{equation*}
			\kolmogorovSec{\sigma}
			\;\leq\; \sup_{\mu\in\Pcal}\norm{\sigma(\mu)}_{\trialSpace}
			\;<\; \infty
		\end{equation*}
		and $\Pcal$ is assumed to be compact.
	\end{itemize}
\end{remark}

Regarding the continuity of a given section $\sigma$ we can say the following:
\begin{remark}
	Let $\Gamma$ be an admissible set of sections and $\sigma_0\colon\Pcal\to\Xhat$ be an additional sectional function. Then, the following statements are equivalent:
	\begin{equation}
		\sigma_0\; \textnormal{is continuous}
		\;\iff\;
		\kolmogorovSec{\sigma_0} = 0 \quad\forallT N\geq 1.
	\end{equation}
\end{remark}
\begin{proof}
	By~\cref{thm:bundles:fell} we know that $\sigma_0$ is continuous if and only if it is contained in $\Gamma$. This immediately implies that the $N$-width vanishes as we can choose $\sigma_0$ to be contained in the approximating set $\Gamma_N$. Conversely, $\sigma_0$ must be an element of $\Gamma_N$ for the $N$-width to vanish which implies that it is also contained in $\Gamma$.
\end{proof}
Note, that for any admissible set $\Gamma$ and a sectional function $\sigma_0\not\in\Gamma$ we can consider the vector space
\begin{equation}
	\Gamma[\sigma_0] \coloneqq \{\sigma+\lambda\sigma_0 \;:\; \sigma\in\Gamma,\, \lambda\in\R\}
\end{equation}
and obtain $\kolmogorovSec[{\Gamma[\sigma_0]}]{\sigma_0}=0$. This illustrates that choosing the sections $\Gamma$ is not a question of \emph{optimal} approximation, it should rather be interpreted as the set of functions that we can evaluate easily for every given parameter (in particular this does \emph{not} include the solution section!). The more information is available on the concrete parameter-dependency, the better we can choose $\Gamma$. We will see examples for choices of $\Gamma$ throughout the remainder of this paper.

Following, it shall thus be discussed which statements are possible under more concrete assumptions on the dependency of the fibers on the parameter $\mu$.

\subsection{Parameter-independent fibers}\label{sec:parameterIndependent}
As a first sanity check, let us consider the special case where the function spaces/fibers do not actually dependent on the parameter. In this setting, the established manifold interpretation and the fiber bundle perspective are equivalent in the following way:

\begin{theorem}\label{thm:kolmogorovIdentity}
	Let all fibers $\trialSpace$ be parameter-independent, i.e.\ $\trialSpace = X$ and $\norm{\cdot}_{\trialSpace} = \norm{\cdot}_{X}$. For any function $\varphi\in X$ consider the associated constant section
	\begin{equation}\label{def:constantSection}
		\sigma_\varphi\colon\Pcal\to X,\qquad \sigma_\varphi(\mu)\equiv\varphi
	\end{equation}
	and the vector space $\Gconst \coloneqq \{ \sigma_\varphi  \;|\; \varphi\in X\}
		\;\cong\; X$
	which trivially satisfies the conditions in~\cref{thm:bundles:fell}.
	Then, the classic Kolmogorov $N$-width of $\Mcal=\textnormal{Im}(\solSection)$ and the Sectional Kolmogorov $N$-width of $\solSection$ coincide, i.e.\
	\begin{equation}
		\kolmogorov{\Mcal} \;=\; \kolmogorovSec[\Gconst]{\solSection}.
	\end{equation}
\end{theorem}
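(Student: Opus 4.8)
The plan is to build an explicit dimension-preserving bijection between the feasible sets of the two infima and to check that the objective functional is unchanged along it; then equality of the two $N$-widths is immediate. The linear isomorphism $\Phi\colon X\to\Gconst$, $\varphi\mapsto\sigma_\varphi$, from the statement does this on the level of spaces, and everything else is a short verification. Concretely, I would first fix a linear subspace $X_N\subseteq X$ with $\dim X_N\le N$ and set $\Gamma_N\coloneqq\Phi(X_N)=\{\sigma_\varphi\;:\;\varphi\in X_N\}\subseteq\Gconst$. Since $\Phi$ is a linear isomorphism, $\dim\Gamma_N=\dim X_N\le N$, and $X_N\mapsto\Gamma_N$ is a bijection from the $\le N$-dimensional subspaces of $X$ onto the $\le N$-dimensional subspaces of $\Gconst$. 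For a fixed $\mu\in\Pcal$, any $\sigma_N\in\Gamma_N$ equals $\sigma_\varphi$ for some $\varphi\in X_N$ and hence $\sigma_N(\mu)=\varphi$; using $\totalSpace_\mu=X$ with $\norm{\cdot}_{\totalSpace_\mu}=\norm{\cdot}_X$ and $\solSection(\mu)=u_\mu$, this gives
\begin{equation*}
	\Inf_{\sigma_N\in\Gamma_N}\norm{\solSection(\mu)-\sigma_N(\mu)}_{\totalSpace_\mu}
	\;=\;\Inf_{\varphi\in X_N}\norm{u_\mu-\varphi}_{X}
	\;=\;\Inf_{u_N\in X_N}\norm{u_\mu-u_N}_{X}.
\end{equation*}

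Next I would take the supremum over $\mu\in\Pcal$. The only point worth a remark is that $\solSection$ need not be injective, but this is harmless: by definition $\Mcal=\textnormal{Im}(\solSection)$, so the set of attained values $\{\,\Inf_{u_N\in X_N}\norm{u_\mu-u_N}_X : \mu\in\Pcal\,\}$ coincides with $\{\,\Inf_{u_N\in X_N}\norm{v-u_N}_X : v\in\Mcal\,\}$, whence their suprema agree. Combined with the previous display this yields
\begin{equation*}
	\sup_{\mu\in\Pcal}\;\Inf_{\sigma_N\in\Gamma_N}\norm{\solSection(\mu)-\sigma_N(\mu)}_{\totalSpace_\mu}
	\;=\;\sup_{u_\mu\in\Mcal}\;\Inf_{u_N\in X_N}\norm{u_\mu-u_N}_{X}.
\end{equation*}

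Finally I would pass to the infimum over admissible approximation spaces. Because $X_N\mapsto\Phi(X_N)$ is a bijection between the two feasible sets and the right-hand side above does not depend on the chosen identification, taking $\Inf$ over all $X_N\subseteq X$ on the right equals taking $\Inf$ over all $\Gamma_N\subseteq\Gconst$ on the left, i.e.\ $\kolmogorov{\Mcal}=\kolmogorovSec[\Gconst]{\solSection}$. For completeness I would also note that $\Gconst$ satisfies the hypotheses of~\cref{thm:bundles:fell}: $\mu\mapsto\norm{\sigma_\varphi(\mu)}_{\totalSpace_\mu}\equiv\norm{\varphi}_X$ is constant, hence continuous, and $\{\sigma_\varphi(\mu_0):\varphi\in X\}=X=\totalSpace_{\mu_0}$ is trivially dense, so the constant sections indeed form an admissible set and $\kolmogorovSec[\Gconst]{\solSection}$ is well posed. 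I do not expect any genuine obstacle; the only care needed is the non-injectivity of $\solSection$ (handled above) and the bookkeeping that the constraint $\dim\le N$ is transported faithfully by $\Phi$.
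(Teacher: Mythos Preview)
Your proposal is correct and follows essentially the same approach as the paper: both arguments exploit the linear isomorphism $\varphi\mapsto\sigma_\varphi$ between $X$ and $\Gconst$ to identify $N$-dimensional subspaces on each side and verify that the inner approximation error is unchanged under this identification. Your version is somewhat more explicit about the bijection on feasible sets and adds the (minor) observations on non-injectivity of $\solSection$ and the Fell hypotheses, but the underlying idea is identical to the paper's chain of equalities.
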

\begin{proof}
	This follows directly from the definitions as one has
	\begin{align*}
		\kolmogorov{\Mcal}
		&= \Inf_{\substack{X_N \subset X \;\textnormal{linear} \\ \dim(X_N)\,\leq\, N}}\; \sup_{u_\mu\in\Mcal} \Inf_{u_N\in X_N} \norm{u_\mu - u_N}_X \\
		&= \Inf_{\{\varphi_1,\ldots,\varphi_N\}\subset X}\;
		 \sup_{\mu\in\Pcal}\; \Inf_{u_N\in\linSpan\{\varphi_i\}} \norm{u_\mu-u_N}_X \\
		&= \Inf_{\{\varphi_1,\ldots,\varphi_N\}\subset X}\;
		 \sup_{\mu\in\Pcal}\; \Inf_{u_N\in\linSpan\{\varphi_i\}} \norm{u_\mu-\sigma_{u_N}(\mu)}_X \\
		&= \Inf_{\{\varphi_1,\ldots,\varphi_N\}\subset X}\; \sup_{\mu\in\Pcal}\; \Inf_{\sigma_N\in\linSpan\{\sigma_{\varphi_i}\}} \norm{\solSection(\mu)-\sigma_N(\mu)}_X \\
		&= \Inf_{\{\sigma_{\varphi_1},\ldots,\sigma_{\varphi_N}\}\subset\Gconst}\; \sup_{\mu\in\Pcal}\; \Inf_{\sigma_N\in\linSpan\{\sigma_{\varphi_i}\}} \norm{\solSection(\mu)-\sigma_N(\mu)}_X \\
		&=\kolmogorovSec[\Gconst]{\solSection}.
	\end{align*}
\end{proof}

\subsubsection{Relation to nonlinear Kolmogorov $N$-widths}
Let us remark that even for no parameter dependency it can be of interest to consider approximability with regard to other choices of sections $\Gamma$. In fact,
our definition then corresponds to nonlinear generalizations of the Kolmogorov $N$-width, see e.g.\ \cite{devore1989optimal,temlyakov1998nonlinear,devore2017foundation,rim2023performance}, which are in general formulated via an ``encoder-decoder'' pair $(E_N,D_N)$, $E_N\colon \Mcal\to\R^N$, $D_N\colon\R^N\to X$, a concrete example being the \emph{manifold width}
\begin{equation}\label{eq:nonlinearNWidth}
	\delta_N(\Mcal)
	\coloneqq
	\Inf_{E_N,D_N \,\textnormal{cont.}}\;
	\sup_{u_\mu\in\Mcal}\; \norm{u_\mu- (D_N\circ E_N)(u_\mu)}_{X}.
\end{equation}
Choosing a linear decoder $D_N$ and the encoder as the projection $E_N = \proj_{X_N}$, one recovers the classic Kolmogorov $N$-width~\cref{eq:kolmogorovNWidth}. The sectional $N$-width~\cref{eq:sectionalNWidth} only requires the decoder to be linear while the structure of the encoder amounts to
\begin{equation}
	E_N(u_\mu)_i \coloneqq (u_\mu,\sigma_i(\mu))_X
\end{equation}
which is then optimized over all choices of $\sigma_i$ from the admissible set $\Gamma$.
\par
Often, the main motivation for these nonlinear generalizations (and nonlinear methods realizing them in a quasi-optimal way) is a slow decay of the classic linear Kolmogorov $N$-width~\cref{eq:kolmogorovNWidth}. For instance, this occurs for equations involving discontinuities, transport phenomena or shocks~\cite{OhlbergerRave}. In the manifold-based view, one hopes for better approximability if instead of a linear space, nonlinear subspaces are used. While we do believe that this makes the need for nonlinear approximations clear, this perspective does not give any intuition on how different nonlinear approximations should compare.
\par
Let us first remark, that for methods based on nonlinear decoders, such as
quadratic manifolds~\cite{barnett2022quadratic} or neural network approximations~\cite{hesthaven2018nonintrusive}, the manifold perspective is what one should have in mind. These approaches are \emph{general purpose} in the sense that they do not inherently include problem-specific information. Note that we are talking about \emph{the design of the method}, the construction of concrete realizations (training) will of course use data such as snapshot solutions of the problem at hand.
In the case of neural networks, a connection to the section-based perspective can still be made:
\begin{example}
Consider an arbitrary artificial neural network (ANN) which has a final linear layer without bias, i.e.\
\begin{equation}
	u_\mu(x) \;\approx\; \textnormal{ANN}(x,\mu) = W^L y^{L-1}(x,\mu)
\end{equation}
In this case, we can interpret the sub-networks mapping to the value in the $i$-th node of the $(L-1)$-th layer as sections, i.e.\
\begin{equation*}
	\sigma_i(\mu) \coloneqq \left (y^{L-1}(\cdot,\mu)\right)_i.
\end{equation*}
This kind of network architectures can for example be found in the
random feature method, which has gained some interest in the
approximation of PDEs~\cite{chen2022bridging}.
\end{example}
The following pages will now show examples where we believe the section-based perspective possesses an advantage over the manifold-perspective:

\subsection{Regularity parametrization}\label{sec:regularityParam}
In the previous section, we utilized the notion of constant sections to show equivalence to the standard Kolmogorov $N$-width. Even if the fibers $\trialSpace$ actually vary with the parameter $\mu$, they might still have a non-empty intersection. In particular this is the case if the parameter influences the regularity of the elements in each fiber. In this case allowing us to identify the functions fulfilling the regularity requirements of all fibers. Formally, we continue focusing on the case where the spaces $\trialSpace$ fulfill the following assumption:
\vspace{0.5em}
\begin{enumerate}[label={(D1)}, align=left, leftmargin=*]
\item The intersection set $\Xinter\coloneqq \bigcap_{\mu\in\Pcal}\trialSpace$ is dense in
  $\trialSpace$ for all parameters $\mu\in\Pcal$.\label[assumption]{ass:trialSpaceDenseness}
\end{enumerate}
\vspace{0.5em}
We may then consider the constant sections
\renewcommand{\Gconst}{\Gamma_{\Xinter}}
\begin{equation*}
  \Gconst \coloneqq \{ \sigma_\varphi\colon\Pcal\to\Xhat,\, \;|\; \varphi\in\Xinter\}
  \;\cong\; \Xinter
\end{equation*}
which still fulfill the second assumption of~\cref{thm:bundles:fell}.

\begin{example}
	Let $\Omega\subset\R^d$ be a bounded domain and every space $X_\mu$ be of Sobolev type, i.e.\ $X_\mu\subseteq W^{k_\mu,p_\mu}(\Omega)$. Then, we have $\Cinf\cap C^0(\overline{\Omega})\subseteq\Xinter$ which is dense in every $X_\mu$.
\end{example}

It is in this generalized setting that we want to give an approximation theoretic result for the solution set of the variational problem~\cref{eq:generalVariationalProblem}. To that end, we state a similar assumption on the potentially also parameter-dependent test spaces $\testSpace$:
\vspace{0.5em}
\begin{enumerate}[label={(D2)}, align=left, leftmargin=*]
	\item The intersection set $\Yinter\coloneqq \bigcap_{\mu\in\Pcal}\testSpace$ is dense in
	$\testSpace$ for all parameters $\mu\in\Pcal$.\label[assumption]{ass:testSpaceDenseness}
\end{enumerate}
\vspace{0.5em}
A similar assumption was used in~\cite{renelt2023model} which was required to prove exponential approximability for parameter-dependent test spaces. The following theorem generalizes this statement to also allow for parametrized trial spaces:
\begin{theorem}[Approximability]\label{thm:FSMOR:kolmogorov}
  Let $\Mcal$ be defined as in~\cref{def:solutionSet} and let the following conditions hold:
  \begin{enumerate}
  \item The denseness assumptions~\cref{ass:trialSpaceDenseness,ass:testSpaceDenseness} hold.
  \item The intersection spaces $\Xinter$ and $\Yinter$ can be equipped with parameter-independent norms
    $\norm{\cdot}_{\Xinter}$, $\norm{\cdot}_{\Yinter}$ equivalent to the parameter-dependent norms
    $\norm{\cdot}_{\trialSpace}$, $\norm{\cdot}_{\testSpace}$,
    i.e.\ there exist $\mu$-independent equivalence
    constants $C,\tilde{C},c,\tilde{c} > 0$ such that
    \begin{align}
      &\tilde{c}\norm{u}_{\Xinter} \;\leq\; \norm{u}_{\trialSpace} \;\leq\; \tilde{C}\norm{u}_{\Xinter}
      \qquad\forallT u\in\Xinter,\label{eq:FSMOR:normEquivalenceTrial} \\
      &c\norm{v}_{\Yinter} \;\leq\; \norm{v}_{\testSpace} \;\leq\; C\norm{v}_{\Yinter}
      \qquad\forallT v\in\Yinter.\label{eq:FSMOR:normEquivalenceTest}
    \end{align}
  \item The $b_\mu$ and $f_\mu$ are parameter-separable on $\Xinter$, $\Yinter$,
    i.e.\ there exist continuous functions
    $\theta^b_i, \theta^f_i\colon \Pcal \rightarrow \R$ and (bi-) linear and continuous functionals
    $b_i\colon\Xinter\times\Yinter \rightarrow \R$, $f_i\colon \Yinter\rightarrow\R$ such that
    for all $u\in\Xinter$ and $v\in\Yinter$ we have the representation
  \begin{equation*}
    b_\mu(u,v) \;=\; \sum_{i=1}^{Q_b}\theta^b_i(\mu)\,b_i(u,v), \qquad
    f_\mu(v) \;=\; \sum_{i=1}^{Q_f}\theta^f_i(\mu)\,f_i(v).
  \end{equation*}
  \end{enumerate}
  Then, the Sectional Kolmogorov $N$-width $\kolmogorovSec[\Gconst]{\solSection}$ of the solution manifold $\Mcal$
  decays exponentially, i.e.\
  \begin{equation*}
    \kolmogorovSec[\Gconst]{\solSection}
    \;\leq\;
    \alpha\cdot \exp(-\beta N^{1/Q_b}).
  \end{equation*}
\end{theorem}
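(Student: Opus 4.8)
The plan is to exploit the denseness and norm‑equivalence hypotheses to show that all fibers $\trialSpace$ actually coincide with one fixed Banach space (and likewise all $\testSpace$), so that the sectional width $\kolmogorovSec[\Gconst]{\solSection}$ collapses to a classical Kolmogorov‑width estimate for an \emph{affinely parametrized} family of well‑posed operator equations; the exponential rate then follows from local Neumann‑series expansions in the $Q_b$ ``effective'' parameters $(\theta^b_i(\mu))_i$ together with a finite covering of their (compact) range.

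\textbf{Step 1 (collapsing the bundle).} Fix $\mu$. By \cref{ass:trialSpaceDenseness}, $\Xinter$ is dense in the Banach space $(\trialSpace,\norm{\cdot}_{\trialSpace})$, and by \cref{eq:FSMOR:normEquivalenceTrial} the norms $\norm{\cdot}_{\Xinter}$ and $\norm{\cdot}_{\trialSpace}$ are equivalent on $\Xinter$; hence $\trialSpace$ is canonically isomorphic to the completion $\hat X$ of $(\Xinter,\norm{\cdot}_{\Xinter})$, and under this identification $\norm{\cdot}_{\trialSpace}$ is a norm on $\hat X$ equivalent to the canonical one with constants $\tilde c,\tilde C$ \emph{independent of} $\mu$. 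The same reasoning collapses every $\testSpace$ to one space $\hat Y$. Consequently every solution $u_\mu$ lies in the single space $\hat X$; the functionals $b_i,f_i$ of hypothesis~3, being continuous on $\Xinter\times\Yinter$ resp. $\Yinter$, extend continuously to $\hat X\times\hat Y$ resp. $\hat Y$; and $b_\mu=\sum_i\theta^b_i(\mu)b_i$, $f_\mu=\sum_j\theta^f_j(\mu)f_j$ hold on $\hat X\times\hat Y$. Unwinding \cref{eq:sectionalNWidth}, an $N$‑dimensional linear subspace of $\Gconst$ is precisely the family of constant sections over some $N$‑dimensional $V_N\subset\Xinter$, and for each $\mu$ the inner infimum equals $\inf_{\varphi\in V_N}\norm{u_\mu-\varphi}_{\trialSpace}\le\tilde C\inf_{\varphi\in V_N}\norm{u_\mu-\varphi}_{\hat X}$. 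It therefore suffices to produce, for each $n\in\N$, a subspace $V_n\subset\Xinter$ with $\dim V_n\le C_5 n^{Q_b}$ and $\sup_{\mu\in\Pcal}\inf_{\varphi\in V_n}\norm{u_\mu-\varphi}_{\hat X}\le C_1\,2^{-n}$; since $\Xinter$ is dense in $\hat X$ we may first build such a space inside $\hat X$ and then perturb a basis into $\Xinter$ at cost $\le C_1\,2^{-n}$.

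\textbf{Steps 2–3 (analyticity in the effective parameters and the rate).} By linearity, $u_\mu=\sum_{j=1}^{Q_f}\theta^f_j(\mu)u^j_\mu$ with $u^j_\mu\in\hat X$ solving $b_\mu(u^j_\mu,v)=f_j(v)$ for all $v\in\hat Y$; as the $\theta^f_j$ are bounded on the compact set $\Pcal$, it is enough to approximate each map $\mu\mapsto u^j_\mu$ and take the span of the resulting spaces (dimension multiplied by the constant $Q_f$). Now $u^j_\mu$ depends on $\mu$ only through $z:=(\theta^b_i(\mu))_i\in\Theta:=\{(\theta^b_i(\mu))_i:\mu\in\Pcal\}$, a compact subset of $\R^{Q_b}$: writing $B_z:=\sum_i z_ib_i\in\mathcal L(\hat X,\hat Y^*)$ and $u^j_z:=B_z^{-1}f_j$, one has $u^j_\mu=u^j_{z}$ for $z=(\theta^b_i(\mu))_i$. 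Well‑posedness of \cref{eq:generalVariationalProblem} for every $\mu$ means each $B_z$, $z\in\Theta$, is an isomorphism; since $z\mapsto B_z$ is affine (hence continuous), $\Theta$ is compact, and inversion is continuous on the open set of invertibles, $\kappa:=\sup_{z\in\Theta}\norm{B_z^{-1}}<\infty$. Fix $z_0\in\Theta$, set $r:=(2\kappa\max_i\norm{b_i})^{-1}$, and note that for $\norm{z-z_0}_{\ell^1}<r$ the operator $S_z:=B_{z_0}^{-1}\sum_i(z_i-z_i^0)b_i\in\mathcal L(\hat X)$ satisfies $\norm{S_z}\le\tfrac12$, so $u^j_z=\sum_{k\ge0}(-1)^kS_z^kB_{z_0}^{-1}f_j$; its degree‑$\le n$ truncation $P^{z_0,j}_n(z)$ is a polynomial of total degree $n$ with $\hat X$‑valued coefficients, $\sup_{\norm{z-z_0}_{\ell^1}<r}\norm{u^j_z-P^{z_0,j}_n(z)}_{\hat X}\le C_2\,2^{-n}$, and the coefficients of $P^{z_0,j}_n$ span a subspace of $\hat X$ of dimension $\le\binom{n+Q_b}{Q_b}$. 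Covering the compact set $\Theta$ by finitely many $\ell^1$‑balls $\mathcal B_r(z_0^{(1)}),\dots,\mathcal B_r(z_0^{(L)})$ with centres in $\Theta$ ($L$ independent of $n$) and letting $V_n\subset\Xinter$ be a density‑perturbation of $\sum_{\ell=1}^L\sum_{j=1}^{Q_f}\big(\text{span of the coefficients of }P^{z_0^{(\ell)},j}_n\big)$, we get $\dim V_n\le L\,Q_f\binom{n+Q_b}{Q_b}\le C_5 n^{Q_b}$, and for any $\mu$ (choosing $\ell$ with $z=(\theta^b_i(\mu))_i\in\mathcal B_r(z_0^{(\ell)})$) the element $\sum_j\theta^f_j(\mu)P^{z_0^{(\ell)},j}_n(z)\in V_n$ approximates $u_\mu$ to within $C_4\,2^{-n}$ in $\hat X$, uniformly in $\mu$. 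Combining with Step~1 and choosing, for given $N$, the largest $n$ with $C_5 n^{Q_b}\le N$ (so $n\ge (N/C_5)^{1/Q_b}-1$) yields $\kolmogorovSec[\Gconst]{\solSection}\le\tilde C C_4\,2^{-n}\le\alpha\exp(-\beta N^{1/Q_b})$ for suitable $\alpha,\beta>0$.

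\textbf{Main obstacle.} The delicate part is the uniformity in Step~2: upgrading ``$B_z$ invertible for each $z\in\Theta$'' to the single bound $\kappa=\sup_{z\in\Theta}\norm{B_z^{-1}}<\infty$ (equivalently, a uniform lower bound on the inf‑sup constant $\infSup_z$), and from there to a Neumann radius $r$ that is \emph{independent of the base point} $z_0\in\Theta$; this rests on continuity of $z\mapsto B_z$, openness of the invertibles, compactness of $\Theta$, and the $\mu$‑independent bounds $\norm{b_i},\norm{f_j}<\infty$ coming from \cref{eq:FSMOR:normEquivalenceTrial,eq:FSMOR:normEquivalenceTest}. The rest—the count $\binom{n+Q_b}{Q_b}\sim n^{Q_b}$ and the conversion of $2^{-n}$ with $n\sim N^{1/Q_b}$ into the stated form—is routine, but it is exactly here that the exponent $1/Q_b$ is born (total‑degree truncation in the $Q_b$ effective $b$‑parameters), whereas the $Q_f$ affine terms of $f_\mu$ contribute only a constant dimension factor and do not affect the exponent. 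A minor technicality is that the approximating space must lie in $\Xinter$, not merely in $\hat X$, which the density perturbation in Step~1 handles.
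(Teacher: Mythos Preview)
Your argument is correct and follows essentially the same route as the paper: both collapse the bundle to a single completion $\hat X\cong\xcalBarZ$ via \cref{ass:trialSpaceDenseness} and the norm equivalence~\cref{eq:FSMOR:normEquivalenceTrial}, then bound the sectional width by a classical Kolmogorov width for an affinely parametrized, uniformly inf--sup stable problem on $\hat X\times\hat Y$.

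The only notable difference is in Steps~2--3: the paper simply invokes the ``well-established'' exponential rate for such problems (citing \cite{urban2023}), whereas you supply a self-contained proof via local Neumann series in the effective parameters $z=(\theta^b_i(\mu))_i$ and a finite covering of the compact set $\Theta\subset\R^{Q_b}$. This buys you an explicit verification of the uniform bound $\kappa=\sup_{z\in\Theta}\norm{B_z^{-1}}<\infty$ (which the paper leaves implicit in the citation) and makes transparent where the exponent $1/Q_b$ originates, at the cost of some additional bookkeeping. Both arguments are standard; yours is the more elementary and complete version of the same idea.
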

\begin{proof}
  The assumptions imply that the parameter-independent spaces
  \begin{equation*}
    \xcalBarZ \;\coloneqq\; \clos_{\norm{\cdot}_{\Xinter}}(\Xinter), \qquad
    \ycalBarZ \;\coloneqq\; \clos_{\norm{\cdot}_{\Yinter}}(\Yinter).
  \end{equation*}
  are isomorphic to every $\trialSpace$ or $\testSpace$, respectively.
  Therefore, the problem 
  \begin{equation*}
    \FindT u_\mu\in\trialSpace\colon\quad
    b_\mu(u_\mu, v) \eq f_\mu(v)
    \qquad \forallT v\in\ycalBarZ.
  \end{equation*}
  is equivalent to~\cref{eq:generalVariationalProblem} in the sense that it results in the same set of solutions $\Mcal$. If we now consider
  \begin{equation}\label{eq:FSMOR:nonparametricProblem}
    \FindT u_\mu\in\xcalBarZ\colon\quad
    b_\mu(u_\mu, v) \eq f_\mu(v)
    \qquad \forallT v\in\ycalBarZ.
  \end{equation}
  and the associated solution set $\overline{\Mcal}\subseteq\xcalBarZ$, 
  we can apply the well-established results for inf-sup stable problems (see e.g.\ \cite{urban2023}) as
  problem \cref{eq:FSMOR:nonparametricProblem} no longer involves parameter-dependent spaces. Therefore,
  $\kolmogorov{\overline{\Mcal}}$ decays with the proposed rate.
  \par
  Using arguments similar to ~\cref{thm:kolmogorovIdentity} and exploiting the norm equivalence we see that
  \begin{align*}
	\kolmogorovSec[\Gconst]{\solSection}
	&= \Inf_{\substack{U_N \subset \Xinter \;\textnormal{linear} \\ \dim(U_N)\,\leq\, N}}\; \sup_{u_\mu\in\Mcal} \Inf_{u_N\in U_N} \norm{u_\mu - u_N}_{\trialSpace} \\
 	&\lesssim \Inf_{\substack{U_N \subset \Xinter \;\textnormal{linear} \\ \dim(U_N)\,\leq\, N}}\; \sup_{u_\mu\in\Mcal} \Inf_{u_N\in U_N} \norm{u_\mu - u_N}_{\Xinter} \\
 	&\leq \Inf_{\substack{U_N \subset \xcalBarZ \;\textnormal{linear} \\ \dim(U_N)\,\leq\, N}}\; \sup_{u_\mu\in\Mcal} \Inf_{u_N\in U_N} \norm{u_\mu - u_N}_{\Xinter} \\
 	&= \kolmogorov{\overline{\Mcal}}.
	\end{align*}
\end{proof}

\begin{remark}
	Note that the distinguishing criterion in the preceding theorem is the norm equivalence which ultimately implies exponential approximability. In~\cref{sec:friedrichs} we will use this criterion to classify different Friedrichs' systems based solely on inspecting the parameter-dependent norm on the trial spaces $\trialSpace$.
\end{remark}

\subsection{Geometrical and general transformation-based parametrizations}\label{sec:geometricalParametrization}
In the following we want to briefly discuss parametrizations that do not fulfill~\cref{ass:trialSpaceDenseness}. In these cases it is not possible to identify enough constant sections to make the set $\Gconst$ admissible -- more structural information is required.
\par
Following, we want to briefly recall nonlinear reduction strategies involving parameter-dependent transformations - either of the underlying domain or of the solutions themselves. We believe that here our framework can provide additional intuition since, as it will become clear, the information about the transformation is retained. In contrast, if one solely thinks about the approach generating \emph{some} nonlinear structure, any further information on its structure is lost.
In the following, we further detail how exactly some of these methods fit into the fiber-based framework and how the corresponding set of sections $\Gamma$ may be interpreted.

\subsubsection{Geometrical parametrizations}
An important class of problems are those involving parametrized geometrical information, i.e.\ where a parameter-dependency of the form $\trialSpace = X(\Omega_\mu)$ occurs.
Let us focus on methods that may be characterized as \emph{Lagrangian} approaches. All of them involve a transformation
\begin{equation*}
	T_\mu\colon\Omega_0\to\Omega_\mu
\end{equation*}
that maps from a reference configuration $\Omega_0$ to the physical domain $\Omega_\mu = T_\mu(\Omega_0)$ (where $\Omega_\mu$ can also be identical for all parameters).
Once again interpreting the set of sections $\Gamma$ as an ``accessible identification'' of functions from different fibers, an intuitive choice for $\Gamma$ are the non-constant sections
\begin{equation}\label{eq:geomSections}
	\sigma_{\varphi,\textnormal{geom}}(\mu) = \varphi(T^{-1}_\mu(\cdot))
\end{equation}
for a suitable set of functions $\varphi$ defined on the reference domain $\Omega_0$. Of course further restrictions on the transformation $T_\mu$ are usually required.
Let us give a few examples of methods falling into this class:

\noindent\textbf{Parametrized geometries}\\
The most immediate problem class are equations posed on a domain $\Omega_\mu$ which actually varies with the parameter requiring the use of function spaces $X_\mu=X(\Omega_\mu)$ on the different domains.
Examples discussed in the MOR community are mostly shape optimization problems, see e.g.\ \cite{thiyagarajan2005shapeOptimizationRB,lassila2010parametric,manzoni2012shape}.

\noindent\textbf{Domain transformation / registration approaches}\\
Even if the physical domain $\Omega_\mu$ remains identical for all solutions, it can still be advantageous to consider parameter-dependent transformations of the domain in order to align features of the solutions appearing at different locations. This idea was initially presented in~\cite{welper2017interpolation} which proposed an ansatz of the form
\begin{equation} 
	u_\mu(x) \approx \sum_{i=1}^M c_\mu^i\, u_i(\phi_\mu^i(x)), \qquad \phi_\mu^i\colon\Omega\to\Omega.
\end{equation}
This again fits our setting choosing the sections
\begin{equation}
	\sigma_{\varphi,i}(\mu) \coloneqq \varphi(\phi^i_\mu(\cdot)), \quad\textnormal{for}\;\varphi\in X,\quad i=1,\ldots,M
\end{equation}
A general approach to performing the transformation was presented in~\cite{taddei2020registration}.

\noindent\textbf{Diffeomorphic space-time transformations}\\
For instationary problems, one may even consider a transformation of the whole (unparametrized) space-time domain $\Omega_T=\Omega\times[0,T]$. In~\cite{kleikamp2022nonlinear} the ansatz
\begin{equation}\label{eq:diffeomorphicTransformations}
	u_\mu(x,t) \approx u_0(\phi_\mu^{-1}(x,t)), \qquad \phi_\mu\colon\Omega_T\to\Omega_T
\end{equation}
with a single given reference snapshot $u_0$ and parameter-dependent diffeomorphisms $\phi_\mu$ is made. 

\noindent\textbf{Mesh-transforming methods}\\
Instead of considering the transformation of the domain $\Omega$ independent of the discretization, one can also directly consider transformations of the discrete mesh $\Omega_h$ \cite{salmoiraghi2018}. Then, we may interpret these methods as discrete problems seeking solutions in the parametrized function spaces
\begin{equation}
	X_\mu = X(\Omega_h^\mu) = X(T_\mu(\Omega_h)).
\end{equation}
Methods based on this idea include for example the implicit feature tracking approach~\cite{mirhoseini2023model}.

\noindent\textbf{The Arbitrary Lagrangian Eulerian (ALE) framework}\\
In~\cite{torlo2020ALE,nonino2024calibration} a generalized transformation ansatz is proposed, introduced as the \emph{arbitrary Lagrangian-Eulerian framework}. Still based on the idea of aligning solution features and thus facilitating better approximability in the reference configuration, the transformation $T_\mu$ is generalized to
\begin{equation*}
	T_\theta\colon\Omega_0\to\Omega
\end{equation*}
combined with a \emph{calibration map}
\begin{equation}
	\theta\colon[0,T]\times\Pcal\to\Pcal_{geom}
\end{equation}
determining the geometric parameters $\theta(t,\mu)$ given the current time $t\in[0,T]$ and physical parameter $\mu\in\Pcal$. 

\subsubsection{Nonlinear solution transformations}
Compared to the previous approaches, the following methods assume that instead of the domain, the solution itself is transformed by a nonlinear mapping \begin{equation}\label{eq:nonlinearSolTransform}
	\psi_\mu\colon X_0\to\trialSpace 
\end{equation}
which lets us characterize them as \emph{Eulerian approaches}. Similar to the geometrical framework, the spaces $\trialSpace$ may also coincide.
In comparison to~\cref{eq:geomSections}, we now identify sections of the form
\begin{equation}\label{eq:sectionsSolutionTransformation}
	\sigma_\varphi(\mu) \coloneqq \psi_\mu(\varphi(\cdot)), \qquad \varphi\in X_0.
\end{equation}
To make this compatible with our framework, we thus have to define the vector space of sections $\Gamma$ with the operations
\begin{align}
	\sigma_{\varphi_1} \oplus \sigma_{\varphi_2} &\coloneqq \sigma_{\varphi_1+\varphi_2},\\
	\lambda\odot\sigma_{\varphi} &\coloneqq \sigma_{\lambda\varphi}.
\end{align}
In particular, note that due to the nonlinearity this means that for a given parameter $\mu\in\Pcal$, one generally has
\begin{equation*}
	(\sigma_{\varphi_1} \oplus \sigma_{\varphi_2})(\mu) \;\neq\; \sigma_{\varphi_1}(\mu) + \sigma_{\varphi_2}(\mu)
\end{equation*}
where on the right hand side $+$ is the standard addition in $\trialSpace$.

\noindent\textbf{Diffeomorphic transformations}\\
Even though it is based on a domain transformation, the approach~\cref{eq:diffeomorphicTransformations} is actually rather Eulerian. This is because the authors parametrize the diffeomorphism $\phi_\mu$ as an element of a geodesic in the diffeomorphism group, characterized as
\begin{equation}
	\phi_\mu = \exp(\vec{v}_\mu)(1)
\end{equation}
with $\exp$ denoting the exponential map and an initial vector field $\vec{v}_\mu\colon\Omega\to\R^d$. The corresponding sections are thus rather given by
\begin{equation}
	\sigma_{\vec{v}}(\mu) \coloneqq u_0((\exp(\vec{v})(1)).
\end{equation}

\noindent\textbf{Shifted POD}\\
The shifted POD~\cite{burela2023spod} follows a similar idea, assuming that the solution $u_\mu$ is given by a superposition of differently shifted fields, i.e.
\begin{equation}
	u_\mu(x,t) \approx \sum_{k=1}^{n_k} T^k(u^k_\mu(x,t))
\end{equation}
where the operator $T^k$ acts as a (time-dependent) shift
\begin{equation}
	T^k(u(x,t)) = u(x-\Delta^k_\mu(t), t).
\end{equation}

\noindent\textbf{Method of freezing}\\
The method of freezing~\cite{ohlberger2013nonlinear} considers instationary problems of the form
\begin{equation}
	\partial_t u_\mu(x,t) + A_\mu(u_\mu) = 0
\end{equation}
and assumes a solution structure
\begin{equation}
	u_\mu(x,t) \approx g_\mu(x,t)\odot v_\mu(x,t)
\end{equation}
where $\odot$ denotes the action of an element $g_\mu$ from a Lie group $G$ on a function $v_\mu\in V$. We may interpret this as a fibration of the form 
$\trialSpace = g_\mu\odot V$ with an admissible set $\Gamma_G$ given by the sections
\begin{equation}
	\sigma_v(\mu) \coloneqq g_\mu\odot v, \qquad v\in V.
\end{equation}
In a certain sense, both Shifted POD and the diffeomorphic mappings are examples, the first considering the translation group and the second the diffeomorphism group as the underlying Lie-group $G$.

\section{Application to Friedrichs' systems}\label{sec:friedrichs}
Given the abstract results presented in the previous section, we will now turn our attention to a specific class of PDE problems, namely Friedrichs' systems. As it will become evident, parametrized problems of this type naturally involve parameter-dependent function spaces are thus a meaningful application for the concepts presented in the preceding section. After formally introducing the problem class, we apply our results and show under which additional constraints the requirements for an exponential decay of the (sectional) $N$-width are satisfied.

\subsection{Basic theory of Friedrichs' systems}\label{sec:theoryFriedrichs}
The Friedrichs' framework generalizes a large class of linear first-order PDE-operators into a
single abstract setting and was introduced by Friedrichs in~\cite{friedrichs1958}.
While the initial theory was based on the notion of strong differentiability,
the ideas have since been extended to the modern concepts
of weak and ultraweak solutions originating from Sobolev
spaces~\cite{ernGuermond2007,ernGuermond2006friedrichs1,
  ernGuermond2006friedrichs2,ernGuermond2008friedrichs3}.
In this section we provide a brief introduction to the theory and state the weak and ultraweak variational formulation. For details and proofs we refer the reader to~\cite{ernGuermond2007}.

\begin{definition}[Friedrichs' operator]
	A (parametrized) Friedrichs' operator is a vector-valued
	differential operator $\FSop$ of the form
	\begin{equation}\label{eq:FS:friedrichsOperator}
	  \FSop\colon \CinfM\to\LtwoM,\qquad
	  \FSop u \;=\; \FScoeff[0] u \;+\; \sum_{i=1}^{d} \FScoeff \frac{\partial u}{\partial x_i}
	\end{equation}
	with matrix-valued coefficient functions $\FScoeff$ satisfying
	\begin{equation*}
	  \FScoeff \in [L^\infty(\Omega)]^{m\times m}, \qquad
	  \nabla\cdot\FSop \coloneqq\;
	  \sum_{i=1}^d \frac{\partial \FScoeff}{\partial x_i}
	  \;\in [L^\infty(\Omega)]^{m\times m}.
	\end{equation*}
	Additionally, the following two properties need to be satisfied:
	\vspace{0.5em}
	\begin{enumerate}[label={(FS\arabic*)}, align=left, leftmargin=*]\itemsep0.5em
	\item $\FScoeff \; = \left(\FScoeff\right)^T$
	  \quad for all $i=1,\dots,d$,\label[assumption]{ass:FS:symmetry}
	\item $\FScoeff[0] + \left(\FScoeff[0]\right)^T - \nabla\cdot\FSop \;>\; 2 \varepsilon I_m$
	  \quad for some $\varepsilon  > 0$.\label[assumption]{ass:FS:positivity}
	\end{enumerate}
	\vspace{0.5em}
	It shall further be assumed that the parameter set~$\Pcal$ is a compact set in~$\R^p$ and that the mappings $\mu\mapsto\FScoeff$ are continuous for all $i=0,\ldots,d$.
\end{definition}
It is worth noting that for non-scalar systems~\cref{ass:FS:positivity} can under certain conditions
be relaxed, see e.g.~\cite{ernGuermond2008friedrichs3} for details.

\begin{definition}[Graph-space]
	The graph space $\graphSpace$ is defined as the space of all square-integrable functions which possess a weak $\FSop$-derivative, i.e.\
	\begin{equation}\label{eq:FS:graphSpace}
		\graphSpace \;\coloneqq\; \{ u\in\LtwoM \;|\; \FSop u\in\LtwoM\}.
	\end{equation}
	A norm on $\graphSpace$ is then given by the graph-norm
	\begin{equation}\label{eq:FS:graphNorm}
		\norm{u}_{\graphSpace}^2 \;\coloneqq\; \norm{u}_{\LtwoM}^2 + \norm{\FSop u}_{\LtwoM}^2.
	\end{equation}
\end{definition}
One immediately verifies that the inclusion $H^1(\Omega)^m \subseteq \graphSpace \subseteq \LtwoM$ holds for any Friedrichs' operator $\FSop$. Further, we can define the formal adjoint operator corresponding to $A$ as
\begin{equation}\label{eq:FS:adjointOperator}
  \FSadjOp\colon \CinfM\to\LtwoM,\qquad
  \FSadjOp v \;=\; (\left(\FScoeff[0]\right)^T - \nabla\cdot\FSop)v - \sum_{i=1}^{d} \left(\FScoeff\right)^T \frac{\partial v}{\partial x_i}
\end{equation}
and check that $\FSadjOp$ is itself a Friedrichs' operator. Moreover, one directly verifies that the
corresponding graph-space~$\graphSpaceAdj$ is isomorphic to the primal graph-space~$\graphSpace$.
\par
In order to derive a well-posed variational problem, additional boundary conditions need to be imposed. Following~\cite{ernGuermond2007}, we define the \emph{boundary operator} $\FSboundaryD\colon \graphSpace\to\graphSpace'$ by
\begin{equation}\label{eq:FS:boundaryOperator}
  (\FSboundaryD u)(v) \;\coloneqq\; (\FSop u,v)_{\LtwoM} - (u,\FSadjOp v)_{\LtwoM} \qquad \forallT u,v\in\graphSpace.
\end{equation}
In particular, this operator vanishes for compactly supported functions~$u\in\CcinfM$ which justifies the term boundary operator.
Additionally, one can show that $\FSboundaryD$ is self-adjoint \cite[Lemma~2.3]{ernGuermond2007}.
For coefficients $\FScoeff$ sufficiently smooth up to the boundary (e.g.\ $\FScoeff\in C^0(\overline{\Omega})$)
one further has the representation
\begin{equation}\label{eq:FS:representationBoundaryOpD}
  (\FSboundaryD u)(v) \;=\; \int_{\partial\Omega} v^T \underline{D}_\mu u \diff s,
  \qquad \underline{D}_\mu\coloneqq \sum_{i=1}^d n_i \FScoeff
\end{equation}
where $\vec{n} = (n_1,\dots,n_d)$ denotes the unit outer normal to the boundary $\partial\Omega$. Further characterizations of the regularity of $\underline{D}_\mu$ can be found e.g.\ in~\cite{rauch1985symmetric}.
\par
To prescribe boundary conditions, the operator $\FSboundaryD$ is then paired with a second,
potentially non-unique \emph{admissible boundary operator}
$\FSboundaryM\colon\graphSpace\to\graphSpace'$ which needs to satisfy the conditions
\vspace{0.5em}
\begin{enumerate}[label={(M\arabic*)}, align=left, leftmargin=*]\itemsep0.5em
\item $(\FSboundaryM u)(u) \;\geq\; 0$
  \quad for all $u\in\graphSpace$,\label[assumption]{ass:FS:boundaryM:1}
\item $\graphSpace = \ker(\FSboundaryD-\FSboundaryM) \;+\; \ker(\FSboundaryD+\FSboundaryM)$.\label[assumption]{ass:FS:boundaryM:2}
\end{enumerate}
\vspace{0.5em}
Given such an operator $\FSboundaryM$ we can define the closed subspaces
\begin{align*}
  \graphSubspace &\coloneqq\; \ker(\FSboundaryD-\FSboundaryM) \;\subset\; \graphSpace, \\
  \graphSubspaceAdj &\coloneqq\; \ker(\FSboundaryD+\FSboundaryM^*) \;\subset\; \graphSpaceAdj
\end{align*}
and show that the restriction to these subspaces implies coercivity of $\FSop$ and $\FSadjOp$ in the following
sense:
\begin{proposition}[$L^2$-coercivity]\label{prop:FS:l2coercivity}
  The restricted Friedrichs' operators
  \begin{equation*}
    \FSop\colon\graphSubspace\to\LtwoM
    \quad\andT\quad
    \FSadjOp\colon\graphSubspaceAdj\to\LtwoM
  \end{equation*}
  are coercive, i.e.\
  \begin{equation*}
    (\FSop u,u)_{\LtwoM} \;\geq\; \varepsilon\norm{u}_{\LtwoM}^2
    \quad\andT\quad
    (v,\FSadjOp v)_{\LtwoM} \;\geq\; \varepsilon\norm{v}_{\LtwoM}^2.
  \end{equation*}
\end{proposition}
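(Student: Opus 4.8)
The plan is to reduce both coercivity bounds to a single algebraic identity, together with the admissibility conditions on $\FSboundaryM$. First I would record the Green-type identity: for every $u\in\graphSpace$,
\[
  (\FSop u,u)_{\LtwoM} + (u,\FSadjOp u)_{\LtwoM}
  \;=\; \bigl((\FScoeff[0]+(\FScoeff[0])^T-\nabla\cdot\FSop)\,u,\,u\bigr)_{\LtwoM}.
\]
Inserting the definitions \cref{eq:FS:friedrichsOperator,eq:FS:adjointOperator} and using the symmetry \cref{ass:FS:symmetry} of the coefficients $\FScoeff$, the two first-order contributions $\sum_i(\FScoeff\partial_i u,u)_{\LtwoM}$ and $-\sum_i((\FScoeff)^T\partial_i u,u)_{\LtwoM}$ cancel \emph{pointwise}, so only the zeroth-order term survives; the identity is purely algebraic in $u$, hence requires neither integration by parts nor a density argument. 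One only uses that $\FSadjOp u\in\LtwoM$ whenever $\FSop u\in\LtwoM$, which is the isomorphism $\graphSpace\cong\graphSpaceAdj$ noted after \cref{eq:FS:adjointOperator}. Combining this with the positivity assumption \cref{ass:FS:positivity} gives
\[
  (\FSop u,u)_{\LtwoM} + (u,\FSadjOp u)_{\LtwoM} \;\ge\; 2\varepsilon\,\norm{u}_{\LtwoM}^2
  \qquad\forallT u\in\graphSpace.
\]

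Next I would bring in the boundary operator. Its defining relation \cref{eq:FS:boundaryOperator} evaluated at $v=u$ reads $(\FSboundaryD u)(u)=(\FSop u,u)_{\LtwoM}-(u,\FSadjOp u)_{\LtwoM}$, so adding it to the identity above yields
\[
  2\,(\FSop u,u)_{\LtwoM}
  \;=\; (\FSboundaryD u)(u)
  + \bigl((\FScoeff[0]+(\FScoeff[0])^T-\nabla\cdot\FSop)u,u\bigr)_{\LtwoM}
  \;\ge\; (\FSboundaryD u)(u) + 2\varepsilon\,\norm{u}_{\LtwoM}^2 .
\]
For $u\in\graphSubspace=\ker(\FSboundaryD-\FSboundaryM)$ one has $\FSboundaryD u=\FSboundaryM u$ in $\graphSpace'$, hence $(\FSboundaryD u)(u)=(\FSboundaryM u)(u)\ge 0$ by \cref{ass:FS:boundaryM:1}; this leaves $2(\FSop u,u)_{\LtwoM}\ge 2\varepsilon\norm{u}_{\LtwoM}^2$, i.e.\ the coercivity of $\FSop$ on $\graphSubspace$.

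For the adjoint I would run the mirror argument: subtracting the relation \cref{eq:FS:boundaryOperator} (at $v=u$) from the identity gives, for $v\in\graphSpaceAdj$,
\[
  2\,(v,\FSadjOp v)_{\LtwoM}
  \;=\; -(\FSboundaryD v)(v)
  + \bigl((\FScoeff[0]+(\FScoeff[0])^T-\nabla\cdot\FSop)v,v\bigr)_{\LtwoM}
  \;\ge\; -(\FSboundaryD v)(v) + 2\varepsilon\,\norm{v}_{\LtwoM}^2 .
\]
For $v\in\graphSubspaceAdj=\ker(\FSboundaryD+\FSboundaryM^*)$ one has $\FSboundaryD v=-\FSboundaryM^* v$, so $-(\FSboundaryD v)(v)=(\FSboundaryM^* v)(v)=(\FSboundaryM v)(v)\ge 0$, where the middle equality is the definition of the adjoint $\FSboundaryM^*$ on the reflexive space $\graphSpace$ and the inequality is again \cref{ass:FS:boundaryM:1}. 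This yields $(v,\FSadjOp v)_{\LtwoM}\ge\varepsilon\norm{v}_{\LtwoM}^2$.

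There is no genuine obstacle here; the computation is routine, and the only points requiring care are (i) that $(u,\FSadjOp u)_{\LtwoM}$ is meaningful for $u$ merely in the graph space, which is precisely what $\graphSpace\cong\graphSpaceAdj$ provides, and (ii) the correct evaluation of the boundary term on the two kernel subspaces, in particular the passage through $\FSboundaryM^*$ in the adjoint case. Should one wish to avoid even (i), one can first prove all of the above for $u\in\CinfM$, where every step is classical, and then pass to the limit using density of smooth functions in $\graphSpace$; but this detour is unnecessary for the argument sketched here.
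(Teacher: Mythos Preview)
Your argument is correct and is essentially the standard proof (as found e.g.\ in Ern--Guermond, to which the paper defers). Note that the paper does not actually supply a proof of this proposition; it is stated as a known ingredient and the reader is implicitly referred to~\cite{ernGuermond2007}. Your write-up recovers precisely that argument: the algebraic identity from \cref{ass:FS:symmetry}, the lower bound from \cref{ass:FS:positivity}, and the sign of the boundary term via \cref{ass:FS:boundaryM:1} on the respective kernels, with the passage $(\FSboundaryM^*v)(v)=(\FSboundaryM v)(v)$ justified by reflexivity of the Hilbert space $\graphSpace$.
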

This is a crucial ingredient to prove that
$\FSop\colon\graphSubspace\to\LtwoM$ constitutes an isomorphism. Equivalently, the following theorem holds:
\begin{theorem}[Well-posedness of the weak problem~{\cite[Thm.~2.5]{ernGuermond2007}}]\label{thm:FS:weakProblem}
For any $f_\mu\in\LtwoM$, the problem
  \begin{equation}\label{eq:FS:weakProblem}
    \FindT u_\mu\in \paramGraphSubspace\colon
    \quad (\FSop u_\mu,v)_{\LtwoM} \eq f_\mu(v) \qquad \forallT v\in\LtwoM.
  \end{equation}
  is well-posed.
\end{theorem}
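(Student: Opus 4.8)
The plan is to reduce the claim to the \BNB{} theorem applied to the bilinear form $a_\mu(u,v)\coloneqq(\FSop u,v)_{\LtwoM}$ on the pair $\graphSubspace\times\LtwoM$, with the graph norm~\cref{eq:FS:graphNorm} on the trial side and the $L^2$-norm on the test side; since $f_\mu\in(\LtwoM)'=\LtwoM$, the abstract data requirement holds automatically. Boundedness is immediate from Cauchy--Schwarz, $|a_\mu(u,v)|\le\norm{\FSop u}_{\LtwoM}\norm{v}_{\LtwoM}\le\norm{u}_{\graphSpace}\norm{v}_{\LtwoM}$, so $a_\mu$ is continuous with constant $1$.

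For the inf-sup condition the natural move is to test with $v=\FSop u$, which already gives $\sup_{v\ne 0}a_\mu(u,v)/\norm{v}_{\LtwoM}\ge\norm{\FSop u}_{\LtwoM}$; it then remains to dominate the full graph norm of $u$ by $\norm{\FSop u}_{\LtwoM}$, i.e.\ to recover its $L^2$-part. This is exactly where \cref{prop:FS:l2coercivity} enters: from $\varepsilon\norm{u}_{\LtwoM}^2\le(\FSop u,u)_{\LtwoM}\le\norm{\FSop u}_{\LtwoM}\norm{u}_{\LtwoM}$ one gets $\norm{u}_{\LtwoM}\le\varepsilon^{-1}\norm{\FSop u}_{\LtwoM}$, hence $\norm{u}_{\graphSpace}^2\le(1+\varepsilon^{-2})\norm{\FSop u}_{\LtwoM}^2$ and an inf-sup constant $\infSup=(1+\varepsilon^{-2})^{-1/2}$ with $\varepsilon$ the constant from~\cref{ass:FS:positivity}.

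The substantial step is the non-degeneracy of $a_\mu$ in the test variable: if $v\in\LtwoM$ satisfies $a_\mu(u,v)=0$ for every $u\in\graphSubspace$, I must show $v=0$. First I would test against $u\in\CcinfM\subseteq\graphSubspace$, which shows $\FSadjOp v=0$ in the distributional sense, so $v\in\graphSpaceAdj$; inserting this into the definition~\cref{eq:FS:boundaryOperator} of $\FSboundaryD$ yields $(\FSboundaryD u)(v)=(\FSop u,v)_{\LtwoM}-(u,\FSadjOp v)_{\LtwoM}=0$ for all $u\in\graphSubspace=\ker(\FSboundaryD-\FSboundaryM)$. If this can be promoted to $v\in\graphSubspaceAdj=\ker(\FSboundaryD+\FSboundaryM^*)$, then \cref{prop:FS:l2coercivity} applied to $\FSadjOp$ forces $\varepsilon\norm{v}_{\LtwoM}^2\le(v,\FSadjOp v)_{\LtwoM}=0$, hence $v=0$, and the \BNB{} theorem delivers the unique solution with stability bound $\norm{u_\mu}_{\graphSpace}\le\infSup^{-1}\norm{f_\mu}_{\LtwoM}$.

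I expect this boundary-condition transfer to be the main obstacle; it is where the admissibility condition~\cref{ass:FS:boundaryM:2} and the self-adjointness of $\FSboundaryD$ are essential (whereas~\cref{ass:FS:boundaryM:1} is what underlies \cref{prop:FS:l2coercivity}). Concretely, one shows that $\graphSubspace$ and $\graphSubspaceAdj$ are mutual polar sets for the pairing $(w,v)\mapsto(\FSboundaryD w)(v)$: given an arbitrary $w\in\graphSpace$, decompose $w=w_++w_-$ with $w_\pm\in\ker(\FSboundaryD\mp\FSboundaryM)$ using~\cref{ass:FS:boundaryM:2}; self-adjointness of $\FSboundaryD$ rewrites $((\FSboundaryD+\FSboundaryM^*)v)(w)$ as $((\FSboundaryD+\FSboundaryM)w)(v)$, which collapses to a multiple of $(\FSboundaryM w_+)(v)$, and the latter vanishes since $w_+\in\graphSubspace$ and $\FSboundaryD w_+=\FSboundaryM w_+$, so that $(\FSboundaryD+\FSboundaryM^*)v$ annihilates all of $\graphSpace$. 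This is the substance of Ern--Guermond's proof of \cite[Thm.~2.5]{ernGuermond2007}, which builds on \cite[Lemma~2.3]{ernGuermond2007}. As a closing remark, every constant appearing above depends on $\mu$ only through $\varepsilon=\varepsilon(\mu)$ in~\cref{ass:FS:positivity}, so the continuity of $\mu\mapsto\FScoeff$ together with compactness of $\Pcal$ renders them uniform in $\mu$ -- the property that makes the parameter-dependent graph norms comparable in the sense needed for~\cref{thm:FSMOR:kolmogorov}.
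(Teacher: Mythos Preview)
The paper does not supply its own proof here; it merely records the statement and defers entirely to \cite[Thm.~2.5]{ernGuermond2007}. Your reconstruction via the \BNB{} theorem is correct and is precisely the Ern--Guermond route: continuity from Cauchy--Schwarz, the inf-sup bound from testing with $v=\FSop u$ combined with \cref{prop:FS:l2coercivity}, and non-degeneracy via the polar-set argument you sketch using~\cref{ass:FS:boundaryM:2} and the self-adjointness of $\FSboundaryD$. The one step you take for granted is the inclusion $\CcinfM\subseteq\graphSubspace$, i.e.\ that $\FSboundaryM$ annihilates compactly supported functions; this does not follow literally from \cref{ass:FS:boundaryM:1,ass:FS:boundaryM:2} as stated here but is part of the structure established in \cite{ernGuermond2007} (their Lemma~2.4 shows $W_0\subseteq V\cap V^*$), and since you already explicitly defer the boundary machinery to that reference it is not a genuine gap.
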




We will later also consider the following ultraweak formulation:
\begin{theorem}[Well-posedness of the ultraweak problem]\label{thm:FS:ultraweakProblem}
  The ultraweak problem
  \begin{equation}\label{eq:FS:ultraweakProblem}
    \FindT u_\mu\in\LtwoM\colon\quad
    (u_\mu,\FSadjOp v)_{\LtwoM} \eq f_\mu(v) \qquad\forallT v\in\paramGraphSubspaceAdj.
  \end{equation}
  is well-posed for any right-hand side $f_\mu\in\graphSubspaceAdj'$.
\end{theorem}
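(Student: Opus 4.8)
The plan is to obtain well-posedness from the \BNB{} theorem, the key point being that the adjoint operator $\FSadjOp$ is itself a Friedrichs' operator, so that the theory already developed for the weak problem transfers to it. Concretely, $\FSadjOp$ satisfies~\cref{ass:FS:symmetry,ass:FS:positivity} (as observed above), and $\FSboundaryM^*$ is an admissible boundary operator for $\FSadjOp$ in the sense of~\cref{ass:FS:boundaryM:1,ass:FS:boundaryM:2}, now with the roles of the two kernel summands in~\cref{ass:FS:boundaryM:2} interchanged. Hence~\cref{thm:FS:weakProblem}, applied to the pair $(\FSadjOp,\FSboundaryM^*)$ instead of $(\FSop,\FSboundaryM)$, shows that
\begin{equation*}
	\FSadjOp\colon\paramGraphSubspaceAdj\longrightarrow\LtwoM
\end{equation*}
is a Banach-space isomorphism. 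Combining this with the $L^2$-coercivity of~\cref{prop:FS:l2coercivity} and Cauchy-Schwarz gives $\norm{v}_{\LtwoM}\leq\varepsilon^{-1}\norm{\FSadjOp v}_{\LtwoM}$, and therefore $\norm{v}_{\graphSpaceAdj}^2\leq(1+\varepsilon^{-2})\norm{\FSadjOp v}_{\LtwoM}^2$ for all $v\in\paramGraphSubspaceAdj$.

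With the isomorphism at hand, the quickest route is a change of test function. Substituting $w\coloneqq\FSadjOp v$, the ultraweak problem~\cref{eq:FS:ultraweakProblem} becomes: find $u_\mu\in\LtwoM$ such that
\begin{equation*}
	(u_\mu,w)_{\LtwoM} \;=\; f_\mu\big((\FSadjOp)^{-1}w\big) \qquad\forallT w\in\LtwoM,
\end{equation*}
and the right-hand side is a bounded linear functional on $\LtwoM$ because $f_\mu\in\paramGraphSubspaceAdj'$ and $(\FSadjOp)^{-1}\colon\LtwoM\to\paramGraphSubspaceAdj$ is bounded. The Riesz representation theorem then delivers a unique $u_\mu$, along with the stability bound $\norm{u_\mu}_{\LtwoM}\leq(1+\varepsilon^{-2})^{1/2}\norm{f_\mu}_{\paramGraphSubspaceAdj'}$. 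Alternatively, one can verify the two \BNB{} conditions for $b_\mu(u,v)\coloneqq(u,\FSadjOp v)_{\LtwoM}$ on $\LtwoM\times\paramGraphSubspaceAdj$ directly: boundedness is Cauchy-Schwarz; for the inf-sup condition test a given $u\in\LtwoM$ with $v\coloneqq(\FSadjOp)^{-1}u$, so that $b_\mu(u,v)=\norm{u}_{\LtwoM}^2$ while $\norm{v}_{\graphSpaceAdj}\leq(1+\varepsilon^{-2})^{1/2}\norm{u}_{\LtwoM}$; and non-degeneracy in the test variable holds since $b_\mu(u,v)=0$ for all $u\in\LtwoM$ forces $\FSadjOp v=0$, hence $v=0$ by injectivity.

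I expect the only step genuinely requiring care to be the first one: verifying that $\FSboundaryM^*$ is an admissible boundary operator for $\FSadjOp$ and that $\paramGraphSubspaceAdj=\ker(\FSboundaryD+\FSboundaryM^*)$ is exactly the subspace on which $\FSadjOp$ is $L^2$-coercive. This relies on the self-adjointness of $\FSboundaryD$ together with the decomposition property~\cref{ass:FS:boundaryM:2}, and is the symmetric counterpart of the reasoning behind~\cref{prop:FS:l2coercivity,thm:FS:weakProblem} (carried out in~\cite{ernGuermond2007}). Once that is in place, the remainder is a routine application of the isomorphism property together with the Riesz representation / \BNB{} theorem.
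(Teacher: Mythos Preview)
Your proposal is correct and follows essentially the same route as the paper: both invoke the \BNB{} theorem, obtain continuity from Cauchy--Schwarz, and derive the inf-sup condition by testing $u\in\LtwoM$ with $v=(\FSadjOp)^{-1}u$, using that $\FSadjOp\colon\paramGraphSubspaceAdj\to\LtwoM$ is an isomorphism (the paper simply cites~\cite[Thm.~2.5]{ernGuermond2006friedrichs1} for this, rather than re-deriving it from admissibility of $\FSboundaryM^*$). Your version is slightly more explicit---you extract the quantitative bound $(1+\varepsilon^{-2})^{1/2}$ from~\cref{prop:FS:l2coercivity} instead of appealing to the bounded inverse theorem, and you also note the test-space non-degeneracy condition, which the paper leaves implicit.
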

\begin{proof}
	As the ultraweak formulation is not explicitly discussed in~\cite{ernGuermond2007}, we give a short proof. Following from the \BNB~ theorem, we need to show continuity and inf-sup-stability. The continuity follows directly from Cauchy-Schwarz as the test space $\graphSubspaceAdj$ is equipped with the graph norm.
	\par
	 To show the stability, we recall that by~\cite[Thm.~2.5]{ernGuermond2006friedrichs1} the adjoint operator $\FSadjOp\colon \graphSubspaceAdj\to\LtwoM$ constitutes an isomorphism. We may thus estimate
  \begin{align*}
    &\Inf_{u\in\LtwoM} \sup_{v\in\graphSubspaceAdj}
      \frac{|(u,\FSadjOp v)_{\LtwoM}|}{\norm{u}_{\LtwoM}\norm{v}_{\graphSpaceAdj}} \\
    \geq\; &\inf_{u\in\LtwoM}
             \frac{|(u,\FSadjOp \FSop^{-*}u)_{\LtwoM}|}{\norm{u}_{\LtwoM}\norm{\FSop^{-*}u}_{\graphSpaceAdj}} \\
    =\; &\Inf_{u\in\LtwoM}
          \frac{\norm{u}_{\LtwoM}}{(\norm{\FSop^{-*}u}_{\LtwoM}^2 + \norm{u}_{\LtwoM}^2)^{1/2}} \\
    \geq\; &(1 + \opNorm{\FSop^{-*}}^2)^{-1/2}
  \end{align*}
  where the last expression is bounded away from zero due to the bounded inverse theorem.
\end{proof}
%

\subsection{Approximation results for Friedrichs' systems}\label{sec:approximationFriedrichs}
In the following, we investigate the approximability of the solution sets corresponding to the weak formulation~\cref{eq:FS:weakProblem}. The ultraweak problem~\cref{eq:FS:ultraweakProblem} was already partially investigated in~\cite{renelt2023model} and can be considered similarly.
According to~\cref{thm:FSMOR:kolmogorov}, we can base any further characterization entirely on the structure of the parameter-dependent spaces $\graphSubspace$ and $\graphSubspaceAdj$. We further notice that already scalar Friedrichs' systems, which amount to linear advection-reaction equations, cover three distinct possibilities for parametrized trial spaces:

\begin{example}[Scalar Friedrichs' systems]
	Let us consider the scalar Friedrichs' operator $A_\mu u = \nabla\cdot(\vec{b}_\mu u) + c_\mu u$ on $\Omega=[0,1]^2$. Defining the inflow boundary
	\begin{equation*}
	\GinMu\coloneqq \{x\in\partial\Omega \;:\; \vec{b}_\mu(x)\cdot\vec{n}(x) < 0\}
	\end{equation*}
	one can show that the only admissible boundary operator is $(\FSboundaryD-\FSboundaryM)(u) = u\restr{\GinMu}$ and we thus have
	\begin{equation}
		\paramGraphSubspace \;\cong\; \left\{u\in\Ltwo \;:\; \vec{b}_\mu\nabla u\in\Ltwo^2,\; u=0 \onT\GinMu\right\}
	\end{equation}
\begin{enumerate}
	\item First, consider the case where only the reaction coefficient $c_\mu$ is parameter-dependent. Then, the corresponding graph spaces are identical as sets but are still equipped with different $\mu$-dependent norms. 
	\item Now, let $\vec{b}_\mu = (\cos(\mu),\sin(\mu))^T$ with $\mu\in[\varepsilon, \tfrac{\pi}{2}-\varepsilon]$ for some small $\varepsilon>0$. Then, the inflow boundary $\Gin=\{0\}\times[0,1]\cup[0,1]\times\{0\}$ is identical for all $\mu$ and one verifies that in this case~\cref{ass:trialSpaceDenseness} holds.
	\item Finally, let again $\Omega=[0,1]^2$ and $\vec{b}(\mu) = (\cos(\mu),\sin(\mu))^T$ but with angle $\mu\in[0,2\pi]$. In this case, one verifies that the intersection set $\Xinter$ is actually $H_0^1(\Omega)$ which is not dense in any $\paramGraphSubspace$, i.e.\ \cref{ass:trialSpaceDenseness} does not hold. 
\end{enumerate} 
\end{example}
We can generalize the observations from this example in the following way:

\begin{lemma}[{Sufficient condition for~\cref{ass:trialSpaceDenseness,ass:testSpaceDenseness}}]\label{lemma:friedrichsDenseness}
	Let the boundary operators $D-M$ and $D+M^*$ be parameter-independent. Then,~\cref{ass:trialSpaceDenseness,ass:testSpaceDenseness} hold.
\end{lemma}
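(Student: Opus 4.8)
The plan is to split the two assumptions according to which of the two variational formulations of~\cref{thm:FS:weakProblem,thm:FS:ultraweakProblem} is in use, and in each case to reduce the matter to one density statement for a Friedrichs' graph subspace. For the weak problem~\cref{eq:FS:weakProblem} the test space equals $\LtwoM$ and is therefore parameter-independent, so~\cref{ass:testSpaceDenseness} holds trivially with $\Yinter=\LtwoM$; symmetrically, for the ultraweak problem~\cref{eq:FS:ultraweakProblem} the trial space is $\LtwoM$, so~\cref{ass:trialSpaceDenseness} is automatic. Hence it suffices to prove the single claim: \emph{if the boundary condition defining $\paramGraphSubspace=\ker(\FSboundaryD-\FSboundaryM)$ does not depend on $\mu$, then $\Xinter=\bigcap_{\mu\in\Pcal}\paramGraphSubspace$ is dense in $\paramGraphSubspace$ for every $\mu\in\Pcal$.} Indeed, $\FSadjOp$ is again a Friedrichs' operator, $\paramGraphSubspaceAdj=\ker(\FSboundaryD+\FSboundaryM^*)$, and the hypothesis states that this latter boundary condition is $\mu$-independent as well; so the very same claim applied to $\FSadjOp$ yields denseness of $\bigcap_\mu\paramGraphSubspaceAdj$ in every $\paramGraphSubspaceAdj$, which is~\cref{ass:testSpaceDenseness} for the ultraweak problem. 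Thus the whole lemma follows from the primal claim.

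To prove it I would first isolate an explicit $\mu$-independent subspace contained in $\Xinter$. Recall that $H^1(\Omega)^m\subseteq\graphSpace$ holds for every Friedrichs' operator, so $H^1(\Omega)^m\subseteq\bigcap_{\mu}\graphSpace$. On $H^1(\Omega)^m$ the boundary operator $\FSboundaryD$ is described by the trace representation~\cref{eq:FS:representationBoundaryOpD}, so for $u\in H^1(\Omega)^m$ the requirement $(\FSboundaryD-\FSboundaryM)u=0$ is a genuine condition on the trace of $u$, and by assumption this condition is one and the same for all $\mu$. Therefore the space $W\coloneqq\{u\in H^1(\Omega)^m : (\FSboundaryD-\FSboundaryM)u=0\}$ coincides with $H^1(\Omega)^m\cap\paramGraphSubspace$ for \emph{every} $\mu$, and consequently $W\subseteq\bigcap_{\mu}\paramGraphSubspace=\Xinter$.

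It then remains to show that $W$ is dense in $\paramGraphSubspace$ for the graph norm, and this is the crux. I would reduce it to a regularity statement using that $\FSop\colon\paramGraphSubspace\to\LtwoM$ is an isomorphism by~\cref{thm:FS:weakProblem}: if $\FSop^{-1}$ maps some dense subset $\Dcal\subseteq\LtwoM$ into $H^1(\Omega)^m$, then $\FSop^{-1}(\Dcal)\subseteq H^1(\Omega)^m\cap\paramGraphSubspace=W$ and, $\FSop^{-1}$ being continuous, $\FSop^{-1}(\Dcal)$ — hence $W$ — is dense in $\FSop^{-1}(\LtwoM)=\paramGraphSubspace$. Such a regularity property — equivalently, the density of $C^\infty(\overline{\Omega})^m$-functions satisfying the boundary condition in the constrained graph space $\ker(\FSboundaryD-\FSboundaryM)$ — is the content of the classical density theory for Friedrichs' graph spaces, and it is where the admissibility conditions~\cref{ass:FS:boundaryM:1,ass:FS:boundaryM:2} on $\FSboundaryM$, together with sufficient regularity of the coefficients up to $\partial\Omega$, really enter; transferring the (standard) density of regular functions in the \emph{full} graph space to the \emph{constrained} subspace is the step I expect to demand the most care. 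Granting this, $\Xinter\supseteq W$ is dense in every $\paramGraphSubspace$, which establishes~\cref{ass:trialSpaceDenseness}, and the adjoint version established in the first paragraph gives~\cref{ass:testSpaceDenseness}, completing the proof.
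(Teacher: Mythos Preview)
Your proposal is correct and ultimately rests on the same ingredient as the paper, namely that smooth functions satisfying the ($\mu$-independent) boundary condition are dense in $\paramGraphSubspace$. The paper invokes this directly as a Meyers--Serrin-type statement: $\paramGraphSubspace$ is the closure of $C^\infty_{D-M}(\Omega)^m\coloneqq\{\varphi\in C^\infty(\Omega)^m:(D-M)\varphi=0\}$ under the graph norm; since this smooth space is $\mu$-independent it sits inside $\Xinter$, and density is immediate. The adjoint case is handled ``analogously'', with no explicit weak/ultraweak case split.

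Your route differs in two cosmetic ways and one that deserves a caution. First, you pass through $W=H^1(\Omega)^m\cap\ker(D-M)$ rather than the smooth space; this is harmless since $C^\infty_{D-M}(\Omega)^m\subseteq W$, so density of the former gives density of the latter. Second, your weak/ultraweak split is a clean observation but not needed. The point to be careful about is the ``regularity'' step: asking for a dense $\Dcal\subseteq\LtwoM$ with $\FSop^{-1}(\Dcal)\subseteq H^1(\Omega)^m$ is, once you unwind it via the isomorphism $\FSop$, \emph{literally the same} as asking that $W$ be dense in $\paramGraphSubspace$. So this step is circular rather than a proof strategy, and phrasing it as an $H^1$-regularity property of $\FSop^{-1}$ is misleading: for Friedrichs' systems with characteristic boundary or rough coefficients such regularity need not hold for, say, all smooth right-hand sides. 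You correctly recognize in the end that what is really needed is the classical density of boundary-constrained smooth functions in the constrained graph space; I would recommend citing that directly, as the paper does, and dropping the regularity detour.
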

\begin{proof}
	This is evident as, by a Meyers-Serrin-type argument, the space $\paramGraphSubspace$ can be identified as the completion of the smooth functions
	\begin{equation*}
		C^\infty_{\FSboundaryD-\FSboundaryM}(\Omega)^m \coloneqq\; \left\{\varphi\in\CinfM \;:\; (\FSboundaryD-\FSboundaryM¸)(\varphi)=0\right\}
	\end{equation*}
	under the parameter-dependent graph-norm $\norm{\cdot}_{\paramGraphSpace}$. If $D-M$ is parameter-independent, one has
	$C^\infty_{D-M}(\Omega)^m \subseteq\Xinter$ and thus~\cref{ass:trialSpaceDenseness} holds. The statement for~\cref{ass:testSpaceDenseness} follows analogously.
\end{proof}

\begin{lemma}[Sufficient condition for a $\mu$-independent norm]\label{lemma:friedrichsNorm}
  Let $A_\mu$ be a Friedrichs' operator which additionally fulfills
  \vspace{0.5em}
  \begin{enumerate}[label={(N1)}, align=left, leftmargin=*]
    \item $A^i_\mu = \ahat\,\tilde{A}^i$
	\quad for all $i=1,\ldots,d$, where $\ahat\in\Linfty$, $\ahat\geq\kappa>0$ a.e. \label[assumption]{ass:zerothOrder}
  \end{enumerate}
  \vspace{0.5em}
  Then for both the weak and ultraweak variational
  formulations~\cref{eq:FS:weakProblem,eq:FS:ultraweakProblem}, the second assumption in~\cref{thm:FSMOR:kolmogorov} holds. The parameter-independent
  norm is given as
  \begin{equation}\label{eq:FSMOR:zerothOrderFSNorm}
    \norm{u}_0^2 \;\coloneqq\; \norm{u}_{\LtwoM}^2 + \norm{\sum_{i=1}^d \tilde{A}^i \partial_{x_i}u}_{\LtwoM}^2.
  \end{equation}
  for $\norm{\cdot}_{\Xinter} = \norm{\cdot}_0$ or $\norm{\cdot}_{\Yinter} = \norm{\cdot}_0$, respectively.
\end{lemma}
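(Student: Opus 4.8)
The plan is to verify the second hypothesis of~\cref{thm:FSMOR:kolmogorov} directly, observing that in each of the two formulations exactly one of the trial/test spaces is a parameter-dependent graph space while the other is $\LtwoM$, whose norm is trivially parameter-independent. Thus it suffices to show that, restricted to the relevant intersection space, the graph norm $\norm{\cdot}_{\paramGraphSpace}$ — and, for the ultraweak formulation~\cref{eq:FS:ultraweakProblem}, the adjoint graph norm $\norm{\cdot}_{\paramGraphSpaceAdj}$ — is equivalent to $\norm{\cdot}_0$ with $\mu$-independent constants. Writing $\tilde{A}u \coloneqq \sum_{i=1}^d\tilde{A}^i\partial_{x_i}u$, so that $\norm{u}_0^2 = \norm{u}_{\LtwoM}^2 + \norm{\tilde{A}u}_{\LtwoM}^2$, the key point supplied by~\cref{ass:zerothOrder} is that the entire $\mu$-dependence of the principal part is concentrated in the scalar factor $\ahat$, i.e.\ $A_\mu u = \FScoeff[0]u + \ahat\,\tilde{A}u$, equivalently $\ahat\,\tilde{A}u = A_\mu u - \FScoeff[0]u$.

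First I would assemble uniform bounds on the coefficients. Continuity of $\mu\mapsto\FScoeff[0]$ and compactness of $\Pcal$ give $C_0 \coloneqq \sup_{\mu\in\Pcal}\norm{\FScoeff[0]}_{[\Linfty]^{m\times m}} < \infty$; choosing a non-vanishing entry $(\tilde{A}^{i_0})_{jk}\neq 0$ of some $\tilde{A}^{i_0}$ one may write $\ahat = (A_\mu^{i_0})_{jk}/(\tilde{A}^{i_0})_{jk}$, so that $C_1 \coloneqq \sup_{\mu\in\Pcal}\norm{\ahat}_{\Linfty} < \infty$ as well (if all $\tilde{A}^i$ vanish, then $A_\mu$ is of zeroth order and $\norm{\cdot}_0 = \norm{\cdot}_{\LtwoM}$, making the claim immediate); finally~\cref{ass:zerothOrder} gives the uniform lower bound $\ahat\geq\kappa>0$ a.e., hence $0\leq\ahat^{-1}\leq\kappa^{-1}$ a.e. The upper bound is then immediate: by the triangle inequality in $\LtwoM$, $\norm{A_\mu u}_{\LtwoM}\leq C_0\norm{u}_{\LtwoM}+C_1\norm{\tilde{A}u}_{\LtwoM}\lesssim\norm{u}_0$, whence $\norm{u}_{\paramGraphSpace}\lesssim\norm{u}_0$. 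For the lower bound I would multiply the identity by $\ahat^{-1}$ to obtain $\tilde{A}u = \ahat^{-1}(A_\mu u - \FScoeff[0]u)$ pointwise and estimate with $|\ahat^{-1}|\leq\kappa^{-1}$, giving $\norm{\tilde{A}u}_{\LtwoM}\leq\kappa^{-1}\big(\norm{A_\mu u}_{\LtwoM}+C_0\norm{u}_{\LtwoM}\big)$ and therefore $\norm{u}_0\lesssim\norm{u}_{\paramGraphSpace}$. Since all constants depend only on $C_0$, $C_1$ and $\kappa$, this establishes the equivalence uniformly in $\mu$, and hence the claim for the weak formulation~\cref{eq:FS:weakProblem} with $\norm{\cdot}_{\Xinter}=\norm{\cdot}_0$.

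For the ultraweak formulation I would rerun the same argument with $\FSadjOp$ in place of $\FSop$. Using the symmetry~\cref{ass:FS:symmetry} one has $(\FScoeff)^T = \FScoeff = \ahat\tilde{A}^i$, so $\FSadjOp v = \big((\FScoeff[0])^T-\nabla\cdot\FSop\big)v - \ahat\,\tilde{A}v$: the principal part is again $\ahat\tilde{A}$ up to a sign (immaterial for $\norm{\tilde{A}v}_{\LtwoM}$), and the zeroth-order coefficient $(\FScoeff[0])^T-\nabla\cdot\FSop$ lies in $[\Linfty]^{m\times m}$ by the very definition of a Friedrichs' operator. The estimates of the previous paragraph then carry over verbatim and yield $\norm{\cdot}_{\paramGraphSpaceAdj}\sim\norm{\cdot}_0$ with $\mu$-independent constants, i.e.\ the second hypothesis of~\cref{thm:FSMOR:kolmogorov} with $\norm{\cdot}_{\Yinter}=\norm{\cdot}_0$. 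The only slightly delicate point in the whole argument is this uniform control of the zeroth-order coefficients over $\Pcal$ — for the adjoint, of $\nabla\cdot\FSop$ as well — which I would obtain from the standing continuity (and hence, by compactness, uniform boundedness) assumptions on the coefficient functions; everything else is elementary and rests solely on the uniform positivity and boundedness $\kappa\leq\ahat\leq C_1$ a.e.
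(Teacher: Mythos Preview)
Your proof is correct and follows essentially the same approach as the paper: split $A_\mu u = \FScoeff[0]u + \ahat\tilde{A}u$, use the triangle inequality together with $\kappa\leq\ahat\leq\norm{\ahat}_{\Linfty}$ to bound $\norm{A_\mu u}_{\LtwoM}$ and $\norm{\tilde{A}u}_{\LtwoM}$ in terms of each other plus $\norm{u}_{\LtwoM}$, and finally invoke compactness of $\Pcal$ for the uniformity of the constants; the adjoint case is handled identically since $\FSadjOp$ is again a Friedrichs' operator with the same principal part up to sign. The paper merely packages the triangle-inequality step with Young's inequality to write explicit squared constants, but the argument is the same.
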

\begin{proof}
  Let us first consider the weak formulation.
  The first bound is obtained by a simple application of the triangle inequality and Young's theorem:
  \newcommand{\FSFOterm}{\sum_{i=1}^d \tilde{A}^i \partial_{x_i}}
  \begin{align*}
    \norm{u}_{\paramGraphSpace}^2
    &= \norm{u}_{\LtwoM}^2 + \norm{A_\mu u}_{\LtwoM}^2 \\
    &\leq \norm{u}_{\LtwoM}^2 + \left(\norm{A_\mu^0 u}_{\LtwoM}
      + \norm{\ahat\FSFOterm u}_{\LtwoM} \right)^2 \\
    &\leq (1+2\norm{A_\mu^0}_{\Linfty}^2)\norm{u}_{\LtwoM}^2
      + 2\norm{\ahat}_{\Linfty}^2\norm{\FSFOterm u}_{\LtwoM}^2 \\
    &\leq \max\{2\norm{\ahat}_{\Linfty}^2,\, 1+2\norm{A_\mu^0}_{\Linfty}^2\} \norm{u}_0^2. \\
  \end{align*}
  For the lower bound we perform a similar estimation by
  expanding the first order term and again using Young's theorem:
  \begin{align*}
    \norm{u}_0^2 \;
    &\leq \norm{u}_{\LtwoM}^2
	+ \norm{1/\ahat}_{\Linfty}^2\norm{\ahat\FSFOterm u}_{\LtwoM}^2 \\
    &\leq \norm{u}_{\LtwoM}^2
      + \kappa^{-2}\left(\norm{\ahat\FSFOterm u + A_\mu^0 u}_{\LtwoM}
      + \norm{A_\mu^0 u}_{\LtwoM}\right)^2 \\
    &\leq \left(1+´2\kappa^{-2}\norm{A_\mu^0}_{\Linfty^m}^2\right)\norm{u}_{\LtwoM}^2
      + 2\kappa^{-2}\norm{A_\mu u}_{\LtwoM}^2 \\
    &\leq \left( \max\{2\kappa^{-2},\, 1+2\kappa^{-2}\norm{A_\mu^0}_{\Linfty}^2 \right)\norm{u}_{\paramGraphSpace}^2.
  \end{align*}
  The proof for the ultraweak formulation is almost identical as $A^*_\mu$ is itself a
  Friedrichs' operator, resulting only in slightly different constants.
  Finally, let us note that all equivalence constants continuously depend on the parameter $\mu$.
  Due to the compactness of the parameter set $\Pcal$, they can thus be uniformly bounded from
  above or below, respectively.
\end{proof}

\begin{theorem}[Exponential approximation of Friedrichs' systems]\label{thm:FSMOR:zerothOrderParam}
	Let $A_\mu$ be a Friedrichs' operator fulfilling~\cref{ass:zerothOrder} with a parameter-separable coefficient $\ahat$. Further, assume parameter-separability of the zeroth-order coefficient $A_\mu^0$. Finally, let the boundary operators $D-M$ and $D+M^*$ be parameter-independent. Then,~\cref{thm:FSMOR:kolmogorov} applies to both the weak and the ultraweak Friedrichs' system.
\end{theorem}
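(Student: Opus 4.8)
The plan is to deduce the statement from a single application of \Cref{thm:FSMOR:kolmogorov}, by checking its three hypotheses for each of the two formulations in question: the weak problem \cref{eq:FS:weakProblem}, with trial space $\paramGraphSubspace$, test space $\LtwoM$ and bilinear form $b_\mu(u,v)=(\FSop u,v)_{\LtwoM}$; and the ultraweak problem \cref{eq:FS:ultraweakProblem}, with trial space $\LtwoM$, test space $\paramGraphSubspaceAdj$ and bilinear form $b_\mu(u,v)=(u,\FSadjOp v)_{\LtwoM}$. Throughout I would use, as the natural standing assumption on the data (and as already required by hypothesis~3 of \Cref{thm:FSMOR:kolmogorov}), that the right-hand side $f_\mu$ is parameter-separable.

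The first two hypotheses are immediate. For hypothesis~1, the boundary operators $D-M$ and $D+M^*$ are parameter-independent by assumption, so \Cref{lemma:friedrichsDenseness} gives \cref{ass:trialSpaceDenseness,ass:testSpaceDenseness}; moreover, for the ultraweak problem \cref{ass:trialSpaceDenseness} holds trivially since $\LtwoM$ does not depend on $\mu$, and symmetrically for the weak problem \cref{ass:testSpaceDenseness} is trivial. For hypothesis~2, \cref{ass:zerothOrder} is satisfied by assumption, so \Cref{lemma:friedrichsNorm} furnishes the parameter-independent norm $\norm{\cdot}_0$ of \cref{eq:FSMOR:zerothOrderFSNorm} --- on $\Xinter$ for the weak problem, on $\Yinter$ for the ultraweak problem --- equivalent to the parameter-dependent graph norm with $\mu$-uniform constants; on the companion $\LtwoM$-space the equivalence is the identity.

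The only step requiring a small computation is hypothesis~3. Writing the hypothesised separable expansions $\FScoeff[0]=\sum_{j=1}^{Q_0}\theta^0_j(\mu)\,A_{0,j}$ and $\ahat=\sum_{k=1}^{Q_a}\theta^a_k(\mu)\,\hat a_k$ with continuous coefficient functions $\theta^0_j,\theta^a_k$ and bounded (matrix- resp.\ scalar-valued) coefficients $A_{0,j},\hat a_k$, \cref{ass:zerothOrder} yields the decomposition
\begin{equation*}
  \FSop u \;=\; \sum_{j=1}^{Q_0}\theta^0_j(\mu)\,A_{0,j}u \;+\; \sum_{k=1}^{Q_a}\theta^a_k(\mu)\,\hat a_k\sum_{i=1}^d\tilde{A}^i\partial_{x_i}u .
\end{equation*}
Pairing with $v$ in $\LtwoM$ then shows that $b_\mu(u,v)=(\FSop u,v)_{\LtwoM}$ is parameter-separable with $Q_b=Q_0+Q_a$ terms, and each component form is continuous on $\Xinter\times\LtwoM$ because $A_{0,j}$ and $\hat a_k$ act as bounded multiplications and $\norm{\sum_i\tilde{A}^i\partial_{x_i}u}_{\LtwoM}\leq\norm{u}_0$ by the definition of $\norm{\cdot}_0$. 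For the ultraweak problem I would instead invoke \cref{eq:FS:adjointOperator}: the coefficients of $\FSadjOp$ are $(\FSadjOp)^i=-\tilde{A}^i\ahat$ and $(\FSadjOp)^0=(\FScoeff[0])^T-\nabla\!\cdot\!\FSop$, and substituting the separable expansions --- noting that spatial differentiation acts only on the $x$-dependent factors $\hat a_k\tilde{A}^i$ and leaves the coefficients $\theta^a_k(\mu)$ untouched --- shows that both coefficient families of $\FSadjOp$ are again parameter-separable; hence $b_\mu(u,v)=(u,\FSadjOp v)_{\LtwoM}$ is parameter-separable with $\LtwoM$-continuous forms. This verifies hypothesis~3, so \Cref{thm:FSMOR:kolmogorov} applies and gives $\kolmogorovSec[\Gconst]{\solSection}\leq\alpha\exp(-\beta N^{1/Q_b})$ in both cases.

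I expect the only (mild) obstacle to be the bookkeeping in the ultraweak case, namely confirming that $(\FSadjOp)^0=(\FScoeff[0])^T-\nabla\!\cdot\!\FSop$ stays parameter-separable once the divergence is taken and while $\tilde{A}^i$ is permitted to vary in $x$. This is harmless: by the product rule only the $x$-dependent data $\hat a_k\tilde{A}^i$ is differentiated, the continuous $\mu$-coefficients $\theta^a_k(\mu)$ passing through unchanged, so $\nabla\!\cdot\!\FSop=\sum_k\theta^a_k(\mu)\sum_{i=1}^d\partial_{x_i}(\hat a_k\tilde{A}^i)$ is separable, as is $(\FScoeff[0])^T$. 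Everything else is a direct citation of \Cref{lemma:friedrichsDenseness,lemma:friedrichsNorm,thm:FSMOR:kolmogorov}.
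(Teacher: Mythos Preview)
Your proposal is correct and follows the same approach as the paper, which simply states that the result ``follows directly from \Cref{lemma:friedrichsDenseness,lemma:friedrichsNorm}.'' You are just more explicit than the paper in spelling out the verification of hypothesis~3 (parameter-separability of $b_\mu$), which the paper leaves implicit in its assumptions on $\ahat$ and $A_\mu^0$.
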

\begin{proof}
	Follows directly from~\cref{lemma:friedrichsDenseness,lemma:friedrichsNorm}.
\end{proof}

\begin{remark}
	Note, that~\cref{ass:zerothOrder} does not directly imply parameter-independence of the boundary operators. As $M$ is not necessarily unique, one could choose different boundary conditions depending on $\mu$, even for a parameter-independent $D$ (which follows e.g.\ for $\ahat=1$).
\end{remark}


\subsubsection*{Exemplary classification of various Friedrichs' systems}
Using~\cref{thm:FSMOR:zerothOrderParam}, many concrete examples of parametrized Friedrichs' systems can already be
classified as exponentially approximable. Following, we list some of these, both in their commonly stated form and in equivalent Friedrichs' form.
For all examples, the following shall be assumed:
\begin{itemize}
	\item All data functions continuously depend on the parameter.
	\item The domain $\Omega$, the parametrization and the boundary operators are chosen in a way such that~\cref{ass:trialSpaceDenseness,ass:testSpaceDenseness} hold.
\end{itemize}
In order to avoid confusion with some of the physical parameters, the parametrized data functions will be highlighted by a subscript $p$.

\newcommand{\param}{p}
\noindent\textbf{Advection-reaction}\\
This example was already discussed earlier and is the only scalar-valued Friedrichs' system. For an advection field $\vec{b}_p$ and a reaction coefficient $c_p$ fulfilling $c_\param+\thalf\nabla\cdot\vec{b}_p\geq\kappa>0$, and a source term $f_p$ the problem reads
\begin{equation*}
	\nabla\cdot(\vec{b}_\param u) + c_\param u = {f_\param},
\end{equation*}
or equivalently in Friedrichs form
\begin{equation*}
	(c_\param+\nabla\cdot\vec{b}_p) u
	+ \sum_{i=1}^d (\vec{b}_{\param})_i \frac{\partial u}{\partial x_i} = {f_\param}.
\end{equation*}
\Cref{thm:FSMOR:zerothOrderParam} can be applied if the advection field $\vec{b}$ is either parameter-independent or solely scales in magnitude and it is known (c.f.\ \cite{OhlbergerRave}) that the general case only gives algebraic decay rates. This example has also been extensively discussed in~\cite{renelt2023model}.

\noindent\textbf{Convection-diffusion-reaction}\\
The convection-diffusion-reaction equation with positive definite diffusivity tensor $D_\param$, given as
\begin{equation*}
    -\nabla\cdot({D_\param}\nabla u) + {\vec{b}_p}\nabla u
    + {c_\param}u = {f_\param}
\end{equation*}
can be transformed into a Friedrichs' system by introducing the total flux $\sigma\coloneqq -D_\param\nabla u + \vec{b}_\param u$, i.e.\
\begin{equation*}
\begin{pmatrix}
	D_\param^{-1} & -D_\param^{-1}\vec{b}_\param \\
	0 & c_\param - \nabla\cdot\vec{b}_\param
\end{pmatrix}
\begin{pmatrix}
	\sigma \\ u
\end{pmatrix}
+
\begin{pmatrix}
	0 & \nabla \\
	\nabla\cdot & 0
\end{pmatrix}
\begin{pmatrix}
\sigma \\ u
\end{pmatrix}
=
\begin{pmatrix}
	0 \\ f_\param
\end{pmatrix}.
\end{equation*}
Thus,~\cref{thm:FSMOR:zerothOrderParam} can be applied without further restrictions resulting in the well-known exponential rates for elliptic problems.


\noindent\textbf{Time-harmonic Maxwell equations}\\
  The time-harmonic Maxwell equations are derived from the full Maxwell equations by assuming time-periodicity with a given frequency $\omega_\param$ (see e.g.~\cite{kirsch2015maxwell}). For magnetic permeability $\mu_\param$, electric permittivity $\varepsilon_\param$ and electric conductivity $\sigma_\param$, and $J_\param$ (the Fourier-transform of) the external electric current, we have
  \begin{equation*}
    \begin{cases}
      -i{\omega_p}{\mu_\param}B + \nabla\times E &= 0 \\
      (-i{\omega_p}{\varepsilon_\param} + {\sigma_\param})E
      - \nabla\times B &= {J_\param}
    \end{cases}
  \end{equation*}
  or in Friedrichs' form
  \begin{equation*}
  	\begin{pmatrix}
  		-i{\omega_p}{\mu_\param} & 0 \\
  		0 & -i{\omega_p}{\varepsilon_\param} + {\sigma_\param}
  	\end{pmatrix}
  	\begin{pmatrix}
  		B \\ E
  	\end{pmatrix}
  	+
  	\begin{pmatrix}
  		0 & \nabla\times \\
  		-\nabla\times & 0
  	\end{pmatrix}
  	\begin{pmatrix}
  		B \\ E
  	\end{pmatrix}
  	=
  	\begin{pmatrix}
  		0 \\ J_\param
  	\end{pmatrix}.
  \end{equation*}
  Technically, we only considered Friedrichs' operators over the real numbers, however, as for example shown in~\cite{antonic2017complexFS}, most concepts can be easily transferred to complex Friedrichs' systems by identifying $\mathbb{C}^m \cong \R^{2m}$ and considering real and imaginary part separately. We therefore expect a similar result as~\cref{thm:FSMOR:zerothOrderParam} to hold in the complex case which would then apply the time-harmonic Maxwell's equations in presented given form.

\noindent\textbf{Linear elasticity}\\
  \newcommand{\stress}[1]{\thalf\left( \nabla #1 + (\nabla #1)^T\right)}
  The linear elasticity equations can be described in terms of the strain $\varepsilon$, stress $\sigma$ and
  displacement $u$ using the relations
  \begin{equation*}
    \begin{cases}
      \hfill\varepsilon &= \stress{u} \\
      \hfill\sigma &= {\lambda_\param} (\nabla\cdot u)I_d + 2\mu_\param\varepsilon \\
      -\nabla\cdot\sigma &= {f_\param}.
    \end{cases}
  \end{equation*}
  with first and second Lam\'e-constants $\lambda_\param$ and $\mu_\param$, as well as an external force field $f_\param$.
  This can be formulated as a Friedrichs' system by setting $\rho\coloneqq -\lambda_\param(\nabla\cdot u)$, $\utilde\coloneqq~2\mu_\param u$ resulting in
  \begin{equation}
  	\begin{cases}
  		\sigma + \rho I_d - \stress{\utilde} &= 0 \\
  		\tr (\sigma) + (d+\frac{2\mu_\param}{\lambda_\param})\rho &= 0 \\
  		-\thalf\nabla\cdot\left(\sigma+\sigma^T\right) &= f_\param.
  	\end{cases}
  \end{equation} 
  We refer to~\cite[Section~3.2]{ernGuermond2006friedrichs2} for the full derivation, definition of the coefficient matrices $\FScoeff$ and subsequent discussion. Once again,~\cref{thm:FSMOR:zerothOrderParam} can be readily applied and indicates exponential convergence without further restrictions.

\section{Conclusion}\label{sec:conclusion}
We have presented a framework to analyze the solution set of parametrized PDEs involving parameter-dependent trial spaces. Instead of the established image of a solution \emph{manifold} embedded into a Hilbert-space (which no longer applies to parametrized solution spaces), an abstraction to a solution \emph{section} of a Hilbert-bundle is proposed and subsequently discussed. It is shown that a topology on the Hilbert-bundle and a generalized notion of linear approximability (in the sense of Kolmogorov) can be derived entirely from a choice of \emph{admissible sections}, 
leading to the notion of Sectional Kolmogorov $N$-widths.
These sections may be interpreted as a set of functions which continuously depend on the parameter and are (computationally) accessible for building reduced approximation spaces. While for certain parametrizations a canonical choice of these sections exists and ensures compatibility with the established definitions, other choices lead to nonlinear notions of approximability which retains relevance beyond the problem of parameter-dependent solution spaces. In contrast to the established manifold-perspective, the section-perspective allows to retain information on how the nonlinear approximation was constructed and which model assumptions entered.
\par
After presenting the framework we showed that if (in addition to other minor assumptions) a certain norm equivalence holds, we can recover exponential convergence of the Sectional Kolmogorov $N$-width. This approximation result is then applied to Friedrichs' systems which provide a relevant example for variational formulations involving a parameter-dependency in the trial space in the form of varying regularity. By the previously shown result, various examples for Friedrichs' systems such as advection-reaction, the time-harmonic Maxwells equations or the linear elasticity equations can then be easily classified once transformed into their corresponding Friedrichs' formulation.
\par
As this work has been also a first step into a new perspective on nonlinear reduction, an important task will be to leverage the insights to further investigate how these insights may be used.
In addition, the connection to other works on nonlinear approximation should be further investigated. In particular, entropy-based concepts of approximability could also be formulated in the fiber-based framework, making use of the locality.


\section*{Declarations}

\subsection*{Funding}
The authors acknowledge funding by the Deutsche Forschungsgemeinschaft
under Germany’s Excellence Strategy EXC 2044 390685587, Mathematics
M\"unster: Dynamics -- Geometry -- Structure.

\noindent C.E. further acknowledges
support by the German Research Foundation (DFG) -- SPP 2410 Hyperbolic
Balance Laws in Fluid Mechanics: Complexity, Scales, Randomness
(CoScaRa), specifically within project 526031774.

\subsection*{Acknowledgements}
The authors thank Jakob Dittmer for the valuable initial discussions on the differential geometric aspects in~\cref{sec:diffGeoFramework}.

\subsection*{Authors' contributions}
\begin{itemize}\itemsep0.5em
\item C.~Engwer: Conceptualization, Supervision, Writing -- Review \& Editing
\item M.~Ohlberger: Conceptualization, Supervision, Writing -- Review \& Editing
\item L.~Renelt: Conceptualization, Formal analysis, Investigation,
  Methodology,
  Writing -- Original draft, Writing -- Review \& Editing
\end{itemize}

\bibliography{sn-bibliography}

\end{document}